\newtheorem{theorem}{Theorem}[section]
\newtheorem*{theorem*}{Theorem}
\newtheorem{lemma}{Lemma}[section]
\newtheorem{corollary}[theorem]{Corollary}
\newtheorem{proposition}{Proposition}[section]
\def\Ric{\text{Ric}}
\def\a{\alpha}
\def\l{\lambda}
\def\p{\partial}
\def\R{\mathbb{R}}
\def\k{\kappa}
\def\Ric{\operatorname{Ric}}
\def\Sect{\operatorname{Sect}}
\def\diam{\operatorname{diam}}
\numberwithin{equation}{section}
\begin{document}

\title[On the Second Robin eigenvalue of the Laplacian]{On the Second Robin eigenvalue of the Laplacian}

\author{Xiaolong Li}
\address{Department of Mathematics, University of California, Irvine, CA 92697, USA}
\email{xiaolol1@uci.edu}

\author{Kui Wang} 
\address{School of Mathematical Sciences, Soochow University, Suzhou, 215006, China}
\email{kuiwang@suda.edu.cn}

\author{Haotian Wu}
\address{School of Mathematics and Statistics, The University of Sydney, NSW 2006, Australia}
\email{haotian.wu@sydney.edu.au}

\subjclass[2010]{35P15, 49R05, 58C40, 58J50}
\keywords{Second Robin Eigenvalue, Eigenvalue Comparison, First Steklov Eigenvalue}

\begin{abstract}
We study the Robin eigenvalue problem for the Laplace-Beltrami operator on Riemannian manifolds. Our first result is a comparison theorem for the second Robin eigenvalue on geodesic balls in manifolds whose sectional curvatures are bounded from above. Our second result asserts that geodesic balls in nonpositively curved space forms maximize the second Robin eigenvalue among bounded domains of the same volume. 
\end{abstract}

\maketitle




\section{Introduction}

Let $(M^n,g)$ be a complete Riemannian manifold of dimension $n$ and $\Omega \subset M^n$ be a bounded domain with Lipschitz boundary. 
The Robin eigenvalue problem for the Laplace operator on $\Omega$ is
\begin{align}\label{eq 1.1}
\begin{cases} 
-\Delta u =\l u  & \text{ in } \Omega, \\
\frac{\p u}{\p \nu} +\a u  =0 & \text{ on } \p \Omega,
\end{cases}
\end{align}
where $\Delta$ denotes the Laplace-Beltrami operator, $\nu$ denotes the outward unit normal to $\p \Omega$, and $\alpha \in \R$ is the Robin parameter. The eigenvalues, denoted by $\l_{k, \a}(\Omega)$ for $k=1, 2, \ldots$, are increasing and continuous in $\alpha$, and for each $\alpha$ satisfy
\begin{align*} 
\l_{1,\a}(\Omega)<\l_{2,\a}(\Omega) \le \l_{3,\a}(\Omega) \le \cdots \rightarrow \infty,
\end{align*}
where each eigenvalue is repeated according to its multiplicity. The first eigenvalue is simple if $\Omega$ is connected; the first eigenfunction is positive.

The theory of self-adjoint operators yields variational characterizations of the Robin eigenvalues. In particular, the first two eigenvalues are characterized by
\begin{align}\label{eq 1.2}
\l_{1,\a}(\Omega) = \inf \left\{ \frac{\int_{\Omega} |\nabla u|^2 \,d\mu_g +\a \int_{\p \Omega} u^2\,dA_g}{\int_{\Omega}u^2 \,\mu_g} :  u \in W^{1,2}(\Omega)\setminus \{0\} \right\}
\end{align} 
and 
\begin{align}\label{eq 1.3}
\l_{2,\a}(\Omega) = \inf \left\{ \frac{\int_{\Omega} |\nabla u|^2 \,d\mu_g  +\a \int_{\p \Omega} u^2 \,dA_g}{\int_{\Omega}u^2 \,\mu_g} :  u \in W^{1,2}(\Omega)\setminus \{0\},\; \int_{\Omega} u u_1\,d\mu_g =0 \right\}
\end{align}
respectively, where $d\mu_g$ is the Riemnnian measure induced by the metric $g$, $dA_g$ is the induced measure on $\p \Omega$, and $u_1$ is the first eigenfunction associated with $\l_{1,\a}(\Omega)$.

The Robin eigenvalue problem \eqref{eq 1.1} generates a global picture of the spectrum of the Laplace operator. Indeed, the Neumann $(\a =0)$, the Steklov ($\l =0$) and the Dirichlet ($\a \to \infty)$ eigenvalue problems are all special cases of the Robin eigenvalue problem. Hence, existing results on the Dirichlet, Neumann or Steklov eigenvalues naturally motivate the investigation on the Robin eigenvalues.

The classical eigenvalue comparisons of Cheng \cite{Cheng75a} state that the first Dirichlet eigenvalue of a geodesic ball in $(M^n, g)$ with Ricci curvature $\Ric_g\geq(n-1)\kappa g$, where $\k\in\mathbb{R}$, is less than or equal to that of a geodesic ball in a space form of constant sectional curvature $\kappa$, and that the reverse inequality holds if the Ricci lower bound is replaced by the sectional curvature upper bound $\Sect_g\leq\kappa$ and the radius of the geodesic ball is no larger than the injectivity radius at its center. Recently, the first two authors \cite[Theorem 1.1]{LW20} have obtained Cheng type comparison theorems for the first Robin eigenvalue when $\a>0$ and have proved comparison theorems with the reverse inequalities when $\a<0$. Moreover, the comparison theorems hold for the first Robin eigenvalue of the $p$-Laplacian for $p\in(1,\infty)$. 

Motivated by Cheng's eigenvalue comparison theorems, Escobar \cite{Escobar00} proved that in two and three dimensions, the first nonzero Steklov eigenvalue of a geodesic ball, whose radius is assumed to be less than or equal to the injectivity radius at the its center, in a complete manifold with $\Sect_g\leq\kappa$, where $\kappa \in \R$, does not exceed the first nonzero Steklov eigenvalue of a geodesic ball of the same radius in a complete simply connected space form of constant sectional curvature $\kappa$; equality holds if and only if the geodesic balls are isometric. 

Inspired by Escobar's work on the first nonzero Steklov eigenvalue, our first result is a comparison theorem for the second Robin eigenvalue.
\begin{theorem}\label{Cheng}
	Let $(M^n, g)$ be a complete Riemannian manifold with sectional curvature $\operatorname{Sect}_g\le \kappa$ for $\k\in\mathbb{R}$, $B(R)$ be an injective geodesic ball of radius $R$ (i.e., $R$ does not exceed the injectivity radius at its center) in $M^n$, and $B_\k(R)$ be a geodesic ball in an $n$-dimensional complete simply connected space form of constant sectional curvature $\k$. If $n=2$ or $n=3$, and $\a\le 0$, then
	\begin{align*}
	\l_{2,\a}(B(R))\le\l_{2,\a}\left(B_\k(R)\right).
	\end{align*}
	Equality holds if and only if $B(R)$ is isometric to $B_\k(R)$.
\end{theorem}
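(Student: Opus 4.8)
The strategy is to construct a suitable test function for the variational characterization \eqref{eq 1.3} on $B(R)$ by transplanting an eigenfunction from the model ball $B_\kappa(R)$, then control the Rayleigh quotient using curvature comparison. First I would recall the structure of the second Robin eigenfunction on the model ball $B_\kappa(R)$: by the symmetry of the model space, $\lambda_{2,\alpha}(B_\kappa(R))$ has multiplicity $n$, and the associated eigenfunctions take the separated form $\phi(r)\,x_i/r$ (for $i=1,\dots,n$), where $r = \dist(\cdot,\text{center})$ and $\phi$ solves the associated radial ODE with the Robin condition $\phi'(R)+\alpha\phi(R)=0$; moreover $\phi$ can be taken positive and increasing on $(0,R)$. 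This is where the hypothesis $\alpha \le 0$ and $n \in \{2,3\}$ enters — as in Escobar's work on the first Steklov eigenvalue, these restrictions guarantee that the relevant radial profile has the monotonicity and sign properties needed for the comparison argument to close.

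Next I would set up the transplantation on $B(R)$. Using geodesic polar coordinates centered at the center $p$ of $B(R)$ (valid since $R$ is at most the injectivity radius), define candidate functions $f_i = \phi(r)\,\theta_i$, where $r=\dist(p,\cdot)$ and $\theta_i$ are the components of the unit tangent direction, i.e.\ the pullbacks of the coordinate functions on the unit sphere. Since $\phi(0)=0$, these are well-defined Lipschitz functions on $B(R)$. The key point is to choose, via a topological (Brouwer degree / intermediate value) argument, a unit vector giving a linear combination $f = \sum c_i f_i$ that is orthogonal to the first Robin eigenfunction $u_1$ on $B(R)$, so that $f$ is admissible in \eqref{eq 1.3}; this is standard for second-eigenvalue comparisons and uses only that $f_i$ depend on an $(n-1)$-sphere of parameters.

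The heart of the proof is the Rayleigh quotient estimate. I would compute $|\nabla f_i|^2$ in polar coordinates: it splits as $(\phi')^2\theta_i^2$ plus a term involving $\phi^2/r^2$ times the squared gradient of $\theta_i$ along the geodesic sphere, with the metric on the geodesic sphere of radius $r$ appearing. The Rauch/Hessian comparison theorem, using $\Sect_g \le \kappa$, gives that the geodesic spheres in $B(R)$ are "larger" than those in $B_\kappa(R)$ in the sense that the relevant metric coefficients dominate the model ones; combined with the positivity of $\phi$ and the sign of $\alpha$, this yields
\begin{align*}
\frac{\int_{B(R)} |\nabla f|^2\,d\mu_g + \alpha \int_{\p B(R)} f^2\,dA_g}{\int_{B(R)} f^2\,d\mu_g} \le \lambda_{2,\alpha}(B_\kappa(R)).
\end{align*}
Together with orthogonality to $u_1$ this gives $\lambda_{2,\alpha}(B(R)) \le \lambda_{2,\alpha}(B_\kappa(R))$. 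The main obstacle I anticipate is precisely this comparison of Rayleigh quotients: one must track how the curvature bound controls \emph{both} the volume element and the induced spherical metric simultaneously, and verify that all error terms have the favorable sign — this is exactly the step where $n\le 3$ and $\alpha\le 0$ are used, and where the analysis is most delicate. Finally, for the rigidity statement, equality forces equality in the Hessian comparison along all geodesics from $p$, which by the rigidity case of the Rauch comparison theorem forces $B(R)$ to have constant curvature $\kappa$ and hence to be isometric to $B_\kappa(R)$.
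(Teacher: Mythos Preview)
Your proposal is correct and follows essentially the same approach as the paper: transplant the model radial profile $F(r)$ times spherical harmonics to $B(R)$, select a linear combination orthogonal to $u_1$ (the paper does this by elementary linear algebra rather than a Brouwer argument, but either works), and then control the Rayleigh quotient using the Jacobian comparison $J'/J \ge J_\kappa'/J_\kappa$ together with $F' \ge 0$. One clarification worth making precise: the restriction $n\in\{2,3\}$ is \emph{not} about monotonicity or sign of the radial profile (that comes purely from $\alpha \le 0$, as in Proposition~\ref{prop2.1}), but rather about the fact that in these dimensions the metric in geodesic polar coordinates can be written as $dr^2 + f^2(r,\theta)\,d\theta^2$ with a \emph{single scalar} conformal factor in front of the round spherical metric, which is exactly what allows the angular gradient term to be compared against the model; in higher dimensions the induced metric on geodesic spheres need not be conformally round, and this is why Theorem~\ref{Cheng-hd} requires the extra symmetry hypothesis.
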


An analogue of Theorem \ref{Cheng} in higher dimensions holds under additional symmetry assumption, cf. Theorem \ref{Cheng-hd}.

By taking $\a=-\sigma_1(B_\k(R))$, Theorem \ref{Cheng} recovers Escobar's result in \cite{Escobar00}.
\begin{corollary}\label{Cheng-cor}
	Under the hypotheses of Theorem \ref{Cheng}, we have 
	\begin{align*}
	\sigma_1(B(R))\le \sigma_1(B_\k(R)).
	\end{align*}
	Equality holds if and only if $B(R)$ is isometric to $B_\k(R)$.
\end{corollary}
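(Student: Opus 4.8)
The plan is to derive Corollary~\ref{Cheng-cor} from Theorem~\ref{Cheng} by specializing the Robin parameter to $\alpha_0:=-\sigma_1(B_\k(R))$. Since the first nonzero Steklov eigenvalue is positive we have $\alpha_0<0$, and $n\in\{2,3\}$ under the hypotheses, so Theorem~\ref{Cheng} applies at $\alpha=\alpha_0$. The bridge is the classical fact that, for every bounded domain $\Omega$ with Lipschitz boundary and every $\alpha\in\R$,
\begin{align*}
\lambda_{2,\alpha}(\Omega)\le 0\quad\Longleftrightarrow\quad \alpha\le -\sigma_1(\Omega);
\end{align*}
in particular $\lambda_{2,-\sigma_1(\Omega)}(\Omega)=0$. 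I would verify this first, using the min--max form $\lambda_{2,\alpha}(\Omega)=\min_{\dim E=2}\max_{u\in E\setminus\{0\}}\big(\int_\Omega|\nabla u|^2\,d\mu_g+\alpha\int_{\partial\Omega}u^2\,dA_g\big)\big/\int_\Omega u^2\,d\mu_g$, where $E$ runs over $2$-dimensional subspaces of $W^{1,2}(\Omega)$.

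For the implication ``$\Leftarrow$'' it suffices, by the monotonicity of $\alpha\mapsto\lambda_{2,\alpha}(\Omega)$, to prove $\lambda_{2,-\sigma_1(\Omega)}(\Omega)\le 0$. Test the min--max formula on $E=\operatorname{span}\{1,v\}$, where $v$ is a first nonzero Steklov eigenfunction of $\Omega$: then $v$ is harmonic, $\int_{\partial\Omega}v\,dA_g=0$ and $\int_\Omega|\nabla v|^2\,d\mu_g=\sigma_1(\Omega)\int_{\partial\Omega}v^2\,dA_g$, so for $u=a+bv$ the cross terms drop and
\begin{align*}
\int_\Omega|\nabla u|^2\,d\mu_g-\sigma_1(\Omega)\int_{\partial\Omega}u^2\,dA_g=-\,\sigma_1(\Omega)\,a^2\,|\partial\Omega|\le 0.
\end{align*}
For ``$\Rightarrow$'', suppose $\lambda_{2,\alpha}(\Omega)\le 0$ and take $L^2(\Omega)$-orthonormal first and second Robin eigenfunctions $u_1,u_2$ at parameter $\alpha$; on $E'=\operatorname{span}\{u_1,u_2\}$ the quadratic form $\int_\Omega|\nabla u|^2\,d\mu_g+\alpha\int_{\partial\Omega}u^2\,dA_g$ equals $\lambda_{1,\alpha}c_1^2+\lambda_{2,\alpha}c_2^2\le 0$ for $u=c_1u_1+c_2u_2$. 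Choosing $0\ne u_0\in E'$ with $\int_{\partial\Omega}u_0\,dA_g=0$ (one linear condition on a $2$-dimensional space), one checks $\int_{\partial\Omega}u_0^2\,dA_g>0$, and hence $\sigma_1(\Omega)\le \int_\Omega|\nabla u_0|^2\,d\mu_g\big/\int_{\partial\Omega}u_0^2\,dA_g\le -\alpha$ by the variational characterization of $\sigma_1(\Omega)$ (the infimum over $W^{1,2}(\Omega)$ with vanishing boundary integral, equal to that over harmonic functions since harmonic extension lowers the Dirichlet energy).

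It then remains to combine. Applying the identity to $\Omega=B_\k(R)$ gives $\lambda_{2,\alpha_0}(B_\k(R))=0$, so Theorem~\ref{Cheng} yields $\lambda_{2,\alpha_0}(B(R))\le 0$; by ``$\Rightarrow$'' applied to $\Omega=B(R)$ this forces $\alpha_0\le -\sigma_1(B(R))$, i.e.\ $\sigma_1(B(R))\le\sigma_1(B_\k(R))$. If equality holds, then $\alpha_0=-\sigma_1(B(R))$, so $\lambda_{2,\alpha_0}(B(R))=0=\lambda_{2,\alpha_0}(B_\k(R))$ and the rigidity statement in Theorem~\ref{Cheng} forces $B(R)$ to be isometric to $B_\k(R)$; the converse is immediate. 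I do not expect a genuine obstacle here: the only point requiring care is the displayed identity linking the Robin and Steklov spectra, which is a standard consequence of the variational characterizations—the geometric content of the corollary is carried entirely by Theorem~\ref{Cheng}.
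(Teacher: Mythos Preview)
Your proposal is correct and follows the same overall strategy as the paper: deduce the Steklov comparison from Theorem~\ref{Cheng} by exploiting the link ``$\lambda_{2,\alpha}\le 0\iff \alpha\le -\sigma_1$'' at $\alpha=-\sigma_1(B_\kappa(R))$. The execution of the key step differs slightly. The paper does not isolate the equivalence as a lemma; instead it uses continuity of $\alpha\mapsto\lambda_{2,\alpha}(B(R))$ to find some $\alpha_0\in[-\sigma_1(B_\kappa(R)),0)$ with $\lambda_{2,\alpha_0}(B(R))=0$, and then the second Robin eigenfunction at $\alpha_0$ is \emph{harmonic} and (via the divergence theorem) has zero boundary mean, so it serves directly as a Steklov test function, yielding $\sigma_1(B(R))\le -\alpha_0\le \sigma_1(B_\kappa(R))$. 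Your route avoids the intermediate-value step by working at the fixed parameter $\alpha_0=-\sigma_1(B_\kappa(R))$ and extracting a Steklov test function from the two-dimensional Robin eigenspan; this packages the Robin--Steklov link as a clean, domain-independent equivalence (at the minor cost of checking $\int_{\partial\Omega}u_0^2>0$, which follows since $u_0|_{\partial\Omega}\equiv 0$ would force $\int_\Omega|\nabla u_0|^2\le 0$ and hence $u_0\equiv 0$). Both arguments carry the equality case back to the rigidity in Theorem~\ref{Cheng} in the same way.
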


In the second part of this paper, we investigate the shape optimization problem for the Robin eigenvalues.

In Euclidean space, the classical Faber-Krahn inequality asserts that the ball uniquely minimizes the first Dirichlet eigenvalue among bounded domains with the same volume. When $\alpha>0$, the ball is also the unique minimizer of the first Robin eigenvalue $\l_{1,\a}(\Omega)$ among domains of the same volume in $\mathbb{R}^n$, as was shown in dimension two by Bossel \cite{Bossel86} in 1986 and extended to all dimensions $n\geq 2$ by Daners \cite{Daners06} in 2006. An alternative approach via the calculus of variations was found by Bucur and Giacomini \cite{BG10, BG15} later. For negative values of $\a$, it was conjectured by Bareket \cite{Bareket77} in 1977 that the ball would be the maximizer among domains in $\mathbb{R}^n$ with the same volume. However, in 2015, Freitas and Krej\v{c}i\v{r}{i}k \cite{FK15} disproved Bareket's conjecture by showing that the ball is not a maximizer for sufficiently negative values of $\a$. In the same paper, the authors showed that in dimension two, the disk uniquely maximizes $\l_{1,\a}(\Omega)$ for $\a <0$ with $|\a|$ sufficiently small, and conjectured that the maximizer still has radial symmetry whenever $\a<0$ and should switch from a ball to a shell at some critical value of $\a$. 

Let us turn to the shape optimization problem for the second Robin eigenvalue $\l_{2,\a}(\Omega)$. Suppose for the moment that $\Omega\subset\mathbb{R}^n$. When $\a >0$, both the second Dirichlet eigenvalue and the second Robin eigenvalue are uniquely minimized by the disjoint union of two equal balls among bounded Lipschitz domains of the same volume. This was proved by Kennedy \cite{Kennedy09}. When $\a =0$, we have $\l_{2,0}(\Omega)=\mu_1(\Omega)$, the first nonzero Neumann eigenvalue, for which the classical Szeg\"o-Weinberger inequality states that among domains with the same volume, the ball uniquely maximizes $\mu_1(\Omega)$. When $\a<0$, it is expected, cf. \cite[Problem 4.41]{ShapeOptmizationbook}, that $\l_{2,\a}(\Omega)$ should be maximal on the ball for a range of Robin parameters. This expectation has recently been confirmed by Freitas and Laugesen, who proved in \cite{FL18} that the ball uniquely maximizes $\l_{2,\a}(\Omega)$ among domains of the same volume provided that $\a$ lies in a regime connecting the first nonzero Neumann eigenvalue $\mu_1$ and the first nonzero Steklov eigenvalue $\sigma_1$, namely $\a \in [-\frac{n+1}{n}R^{-1}, 0]$, where $R$ is the radius of the ball of the same volume as $\Omega$. Taking $\a=0$ and $\a=-1/R$ recovers the Szeg\"o-Weinberger inequality for $\mu_1(\Omega)$ and the Brock-Weinstock inequality for $\sigma_1(\Omega)$ respectively, and both classical inequalities assert that the ball is the unique maximizer among domains with the same volume in Euclidean space. 

It is well-known that the Faber–Krahn inequality holds in any Riemannian manifold in which the isoperimetric inequality holds, see \cite{Chavel84}. Also, the Sz\"ego-Weinberger inequality holds for domains in the hemisphere and in the hyperbolic space \cite{AB95}. Therefore, it is a natural question to extend the result of Freitas and Laugesen \cite{FL18} to space forms. In this direction, our second result states that in complete simply connected nonpositively curved space forms, geodesic balls uniquely maximize the second Robin eigenvalue among domains with the same volume.

\begin{theorem}\label{thm1}
	Let $(M_\k^n, g_\k)$ be a complete simply connected $n$-dimensional space form of constant sectional curvature $\k$, where $\k\le 0$, and $\Omega\subset M^n_\k$ be a bounded domain with Lipschitz boundary. Let $\Omega^*\subset M^n_\k$ be a geodesic ball of the same volume as $\Omega$, and $\sigma_1(\Omega*)$ be the first nonzero Steklov eigenvalue on $\Omega^*$. If $\a\in[-\sigma_1(\Omega^*),0]$, then
	\begin{align*}
	\l_{2,\a}(\Omega) \le \l_{2,\a}(\Omega^*).
	\end{align*}
	Equality holds if and only if $\Omega$ is a geodesic ball.  
\end{theorem}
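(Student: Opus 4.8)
The plan is to adapt the Freitas--Laugesen method \cite{FL18} to nonpositively curved space forms, using suitable radial test functions on the geodesic ball $\Omega^*$ as trial functions in the variational characterization \eqref{eq 1.3} for $\l_{2,\a}(\Omega)$. The starting point is the observation that on the model ball $\Omega^*=B_\k(R)$, the second Robin eigenvalue is realized by $n$ mutually orthogonal eigenfunctions of the form $g(r)\,\Theta_i$, where $r=\dist(\cdot,o)$, $\Theta_i$ are the first spherical harmonics (coordinate functions) on the unit sphere, and $g$ solves a one-dimensional Robin problem on $[0,R]$; this uses the fact that for $\a$ in the stated range the second eigenspace on a ball comes from the first spherical mode rather than the radial mode, which is precisely where the bound $\a\ge -\sigma_1(\Omega^*)$ enters. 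I would first record the ODE satisfied by $g$, together with the sign information $g>0$, $g'\ge 0$ on $(0,R)$, and then extend $g$ to a function $G$ on all of $M^n_\k$ by setting $G\equiv g(R)$ for $r\ge R$, so that $G$ is Lipschitz, nondecreasing, and constant outside $\Omega^*$.

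The core of the argument is a center-of-mass / Brouwer degree argument: for any bounded domain $\Omega$ one can, after an isometry, place $\Omega$ so that the $n$ functions $P_i(x)=G(|x|)\,\Theta_i(x/|x|)$ (extended by $G(|x|)$ times the normalized position vector in the model, interpreted via the exponential map at a chosen center) satisfy $\int_\Omega P_i\,u_1\,d\mu_g=0$ for all $i$, where $u_1$ is the first Robin eigenfunction of $\Omega$; this is the standard topological trick ensuring admissibility in \eqref{eq 1.3}. Using each $P_i$ as a test function and summing over $i$, the numerator becomes $\sum_i\int_\Omega|\nabla P_i|^2 + \a\sum_i\int_{\p\Omega}P_i^2$, and the Robin term is controlled because $\a\le 0$ and $\sum_i P_i^2 = G(|x|)^2$ is a radial, nondecreasing function, so moving mass toward the origin (comparing with $\Omega^*$) only helps; the gradient term, after using $\sum_i|\nabla\Theta_i|^2=n-1$ on the sphere, reduces to an integral of $g'(r)^2 + \tfrac{n-1}{?}\,\frac{g(r)^2}{\sn_\k(r)^2}$ type expressions, where $\sn_\k$ is the $\k$-sine function, and this integrand is monotone in $r$ in the appropriate sense. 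A symmetrization/rearrangement step then shows $\sum_i\int_\Omega|\nabla P_i|^2\le\sum_i\int_{\Omega^*}|\nabla P_i|^2$ and likewise for the boundary term, so that the weighted average of the Rayleigh quotients is at most $\l_{2,\a}(\Omega^*)$, forcing $\l_{2,\a}(\Omega)\le\l_{2,\a}(\Omega^*)$.

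I expect the main obstacle to be the monotonicity of the gradient integrand $Q(r):=g'(r)^2+(n-1)\dfrac{g(r)^2}{\sn_\k(r)^2}$ together with the rearrangement inequality in the curved setting: in $\mathbb{R}^n$ Freitas and Laugesen verify that $Q$ is nonincreasing so that replacing the volume distribution of $\Omega$ by that of the ball increases $\int|\nabla P|^2$; in the hyperbolic case ($\k<0$) one must check that the extra curvature terms coming from the Jacobian $\sn_\k(r)^{n-1}$ and from $|\nabla(G\Theta)|^2$ do not destroy this monotonicity. This is where the hypothesis $\k\le 0$ is genuinely used (the comparison $\sn_\k(r)\ge r$ and convexity of the warping function go the right way). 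The remaining steps — establishing the ODE for $g$, the positivity/monotonicity of $g$, the degree argument for the orthogonality conditions, and tracking the equality case back to $\Omega$ being a metric ball via rigidity in the isoperimetric inequality — are analogous to the Euclidean case and to the proof of Theorem \ref{Cheng}, and I would treat them more briefly, citing \cite{FL18} and \cite{AB95} where the arguments transfer verbatim.
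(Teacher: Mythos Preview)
Your outline captures the overall strategy correctly --- radial trial functions on the model ball, a Brouwer-type center-of-mass argument to enforce orthogonality to $u_1$, and a monotonicity-plus-rearrangement step --- and this is indeed what the paper does. However, there is a genuine gap in how you propose to handle the Robin boundary term.

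You write that ``the Robin term is controlled because $\a\le 0$ and $\sum_i P_i^2 = G(|x|)^2$ is a radial, nondecreasing function, so moving mass toward the origin only helps,'' and that a rearrangement step gives the desired inequality ``likewise for the boundary term.'' But the boundary term is a \emph{surface} integral $\a\int_{\p\Omega}G^2\,dA$, not a volume integral, and there is no rearrangement inequality comparing $\int_{\p\Omega}G^2\,dA$ with $\int_{\p\Omega^*}G^2\,dA=g(R)^2|\p\Omega^*|$. With your constant extension $G\equiv g(R)$ outside $B_\k(R)$, points of $\p\Omega$ lying inside $\Omega^*$ carry weight $G^2<g(R)^2$, and although isoperimetry gives $|\p\Omega|\ge|\p\Omega^*|$, these two effects pull in opposite directions and do not combine to yield $\int_{\p\Omega}G^2\ge g(R)^2|\p\Omega^*|$ (which is what $\a\le 0$ would require). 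So the step ``likewise for the boundary term'' does not go through.

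The paper resolves this by \emph{not} treating the boundary term separately. Instead it applies the divergence theorem,
\[
\int_{\p\Omega}F^2\,dA \;\ge\; \int_{\p\Omega}F^2\langle\nabla r_p,\nu\rangle\,dA \;=\;\int_\Omega\Bigl((F^2)'+(n-1)\tfrac{sn_\k'}{sn_\k}F^2\Bigr)\,d\mu,
\]
to convert the surface integral into a bulk one (since $\a\le 0$ the inequality points the right way), and then absorbs it into a single radial integrand
\[
H(r)=(F')^2+\tfrac{n-1}{sn_\k^2}F^2+2\a FF'+\a(n-1)\tfrac{sn_\k'}{sn_\k}F^2.
\]
The heart of the proof is showing $H$ is monotonically decreasing on $(0,\infty)$; for this the paper extends $F$ not by a constant but by $F(r)=F(R)e^{-\a(r-R)}$ for $r>R$ (so that $F'/F=-\a$ persists), and uses both $\l_{2,\a}(B_\k(R))\ge 0$ and the pointwise bound $F'/F\ge -\a$ on $(0,R]$, which is where the hypothesis $\a\ge -\sigma_1(\Omega^*)$ is actually consumed. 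Once $H$ is decreasing and $F^2$ is increasing, the volume rearrangement $\int_\Omega H\le\int_{\Omega^*}H$ and $\int_\Omega F^2\ge\int_{\Omega^*}F^2$ is legitimate and finishes the argument. Your identification of the monotonicity of $Q=(g')^2+(n-1)g^2/sn_\k^2$ as the main obstacle is close, but the correct object is $H$, and getting to $H$ requires the divergence-theorem step that your proposal is missing.
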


When $\k=0$, $\sigma_1(\Omega*)=R^{-1}$, where $R$ is the radius of the ball $\Omega^*$ in $\mathbb{R}^n$. Then Theorem \ref{thm1} says that the ball maximizes the second Robin eigenvalue $\lambda_{2,\a}$, where the Robin parameter $\a\in[-R^{-1},0]$, among domains of the same volume in Euclidean space. In comparison, the same result has been proved in \cite{FL18} for the larger interval, i.e., $[-\frac{n+1}{n}R^{-1}, 0]$, of Robin parameters. We note that these two intervals agree asymptotically as $n\to\infty$. The proof in \cite{FL18} used scaling arguments, which are special to Euclidean space. In contrast, we give a uniform proof for all space forms with nonpositive curvature.

By taking $\a=-\sigma_1(\Omega^*)$, Theorem \ref{thm1} implies that geodesic balls uniquely maximize the first nonzero Steklov eigenvalue among domains of the same volume in complete simply connected nonpositively curved space forms, which recovers a result of Escobar in \cite{Escobar99}. The result in \cite{Escobar99} has recently been generalized to Riemannian manifolds by the authors of this paper in \cite{LWW20}.

This paper is organized as follows. In Section \ref{prelim}, we set up the notation and recall some facts on the eigenfunctions for the second Robin eigenvalue. In Section \ref{cheng-proof}, we prove Theorem \ref{Cheng} and its higher dimensional analogue assuming additional symmetries. Section \ref{thm1-proof} is devoted to the proof of Theorem \ref{thm1}.

\subsection*{Acknowledgements}
We thank Professors Richard Schoen, Lei Ni and Zhou Zhang for their encouragement and support. K. Wang is partially supported by NSFC No.11601359; H. Wu is supported by ARC Grant DE180101348. Both K. Wang and H. Wu acknowledge the excellent work environment provided by the Sydney Mathematical Research Institute.

\section{Preliminaries}\label{prelim}

Throughout the paper, the function $sn_\kappa$ is defined by
\begin{align}\label{sine}
sn_\kappa(t) := 
\begin{cases}
\frac{1}{\sqrt{\kappa}} \sin(\sqrt{\kappa} t), & \text{ if } \kappa >0,\\
t, & \text{ if } \kappa=0,\\
\frac{1}{\sqrt{-\kappa}}\sinh{(\sqrt{-\kappa}t)}, & \text{ if } \kappa<0.
\end{cases}
\end{align}

We fix some notation. For any bounded Lipschitz domain $\Omega\subset M:=M^n$, we denote by $\diam(\Omega)$ the diameter of $\Omega$, by $|\Omega|$ and $|\partial \Omega|$ the $n$-dimensional volume of $\Omega$ and
the $(n-1)$-dimensional Hausdorff measure of $\partial \Omega$ respectively, each taken with respect to the Riemannian metric $g$. Let $(M_\kappa,g_\kappa)$ denote the $n$-dimensional complete simply connected space form of constant sectional curvature $\kappa$ and $\Omega^*_q$ be a geodesic ball in $M_\kappa$ centered at $q$ with $|\Omega^\ast_q|_\kappa=|\Omega|$, where $|\Omega^*_q|_\kappa$ is the $n$-dimensional volume of $\Omega^*$ with respect to $g_\kappa$.

We collect some facts about the Robin eigenfunctions. Denote by $\l_{2,\a}(B_\k(R))$ the second Robin eigenvalue for a geodesic ball $B_\k(R)$ of radius $R$ in the space form $M_\kappa$. The second Robin eigenfunctions are given by 
\begin{align*}
u_i(x)=F(r) \psi_i(\theta),\quad 1\leq i \leq n,
\end{align*}
where $\psi_i(\theta)$'s are the linear coordinate functions restricted to $\mathbb{S}^{n-1}$, and $F(r):[0,R]\to[0,\infty)$ solves the ODE initial value problem
\begin{align}\label{odeF}
F''+(n-1)\frac{sn_\kappa'}{sn_\kappa}F'+\left(\l_{2,\a}(B_\k(R))-\frac{n-1}{sn_\kappa^2}\right)F=0,\quad F(0)=0,\quad F'(0)=1.
\end{align}
We have $F'(R)=-\a F(R)$, and
$\l_{2,\a}(B_\k(R))$ is characterized by
\begin{align}\label{eq 2.2}
\l_{2,\a}(B_\k(R)) =\inf \left\{
\frac{\int_0^R\left( (v')^2+\frac{n-1}{sn_\k^2}v^2\right) sn_\k^{n-1} \, dt +\a  v^2(R)sn_\k^{n-1}(R)}{\int_0^R v^2 sn_\k^{n-1}\,  dt}\right\}
\end{align}
for $ v \in W^{1,2}\left([0,R]\right)\setminus \{0\}$ with $v(0)=0$.

\begin{proposition}\label{prop2.1}
	Let $F(r)$ be the solution to \eqref{odeF}. If $\a<0$, then
	\begin{enumerate}
		\item $F'(r)>0$ for $r\in [0, R]$.
		\item Assume further that $\a\ge-2\frac{sn_\k'(R)}{sn_\k(R)}$. Then $\frac{F'(r)}{F(r)}\ge -\a$ for $r\in (0, R]$.
	\end{enumerate}
\end{proposition}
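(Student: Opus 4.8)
The plan is to analyze the ODE \eqref{odeF} directly, keeping $\l:=\l_{2,\a}(B_\k(R))$ fixed. First some preliminary facts. The excerpt records $F\ge 0$ on $[0,R]$; since $F(0)=0$, $F'(0)=1$, and \eqref{odeF} is a regular linear equation on $(0,R]$, a zero of $F$ at some $r_0\in(0,R]$ would be a minimum, hence would give $F'(r_0)=0$ and force $F\equiv0$ by uniqueness, contradicting $F'(0)=1$; therefore $F>0$ on $(0,R]$. Because $\a<0$, the Robin condition $F'(R)=-\a F(R)$ then gives $F'(R)>0$. I will also use the identity $(sn_\k')^2+\k\, sn_\k^2\equiv1$, which yields $\bigl(sn_\k'/sn_\k\bigr)'=-1/sn_\k^2<0$; in particular $sn_\k$ is increasing and $sn_\k'/sn_\k$ is strictly decreasing on $(0,R]$ (automatic when $\k\le0$; when $\k>0$ I assume $R\le\pi/(2\sqrt\k)$, which is what makes the hypothesis in (2) compatible with $\a<0$).

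For part (1), the plan is to study $P:=sn_\k^{n-1}F'$. It satisfies $P(0)=0$ and, by \eqref{odeF},
\[
P'=\bigl(sn_\k^{n-1}F'\bigr)'=sn_\k^{n-1}\Bigl(\tfrac{n-1}{sn_\k^2}-\l\Bigr)F=sn_\k^{n-3}\,F\,\bigl(n-1-\l\, sn_\k^2\bigr).
\]
Since $F>0$ and $sn_\k$ is increasing, the function $r\mapsto n-1-\l\, sn_\k^2(r)$ is monotone; as it equals $n-1>0$ at $r=0$, it is either positive on all of $[0,R]$ or positive on $[0,r_c)$ and negative on $(r_c,R]$ for a unique $r_c$. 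In the first case $P'>0$ on $(0,R]$, so $P$ is increasing from $P(0)=0$, whence $P>0$ on $(0,R]$. In the second case $P$ is increasing on $[0,r_c]$, so $P>0$ on $(0,r_c]$, and $P$ is decreasing on $[r_c,R]$ with endpoint value $P(R)=sn_\k^{n-1}(R)\,F'(R)>0$, so $P>0$ on $[r_c,R]$ as well. Either way $F'=P/sn_\k^{n-1}>0$ on $(0,R]$, and $F'(0)=1>0$; this proves (1).

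For part (2), the plan is a Riccati argument for $h:=F'/F$, which by (1) is positive on $(0,R]$, satisfies $h(R)=-\a$ and $h(r)\to+\infty$ as $r\to0^+$ (since $F(r)\sim r$, $F'(r)\to1$), and solves
\[
h'+h^2+(n-1)\tfrac{sn_\k'}{sn_\k}\,h+\l-\tfrac{n-1}{sn_\k^2}=0.
\]
Suppose toward a contradiction that $h<-\a$ somewhere on $(0,R]$. Then $\inf_{(0,R]}h<-\a$; since $h\to+\infty$ near $0$ and $h(R)=-\a>\inf h$, this infimum is attained at an interior point $r^*\in(0,R)$, where $h'(r^*)=0$, $h''(r^*)\ge0$, and (by (1)) $0<h(r^*)<-\a$. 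Differentiating the Riccati equation and using $\bigl(sn_\k'/sn_\k\bigr)'=-1/sn_\k^2$, at $r^*$ one gets
\[
h''(r^*)=\tfrac{n-1}{sn_\k^2(r^*)}\Bigl(h(r^*)-2\,\tfrac{sn_\k'(r^*)}{sn_\k(r^*)}\Bigr),
\]
so $h''(r^*)\ge0$ forces $h(r^*)\ge 2\,sn_\k'(r^*)/sn_\k(r^*)$. But $sn_\k'/sn_\k$ is strictly decreasing and $r^*<R$, so $2\,sn_\k'(r^*)/sn_\k(r^*)>2\,sn_\k'(R)/sn_\k(R)\ge-\a$ by the hypothesis $\a\ge-2\,sn_\k'(R)/sn_\k(R)$, contradicting $h(r^*)<-\a$. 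Hence $h\ge-\a$ on $(0,R]$, which is (2).

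The two differentiations of the ODE/Riccati relation are routine. The points that need care are: deducing $F>0$ on $(0,R]$ from the stated nonnegativity of $F$; in part (1), exploiting the boundary value $F'(R)>0$ to carry positivity of $P$ across the interval $[r_c,R]$ on which $P$ is decreasing (a naive sign analysis alone does not close this case); and in part (2) — the crux — the way the second‑derivative test at the minimum of $h$ interacts with the precise hypothesis $\a\ge-2\,sn_\k'(R)/sn_\k(R)$ through the monotonicity of $sn_\k'/sn_\k$, which is exactly what pins down the admissible range of Robin parameters.
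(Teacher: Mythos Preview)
Your proof is correct and follows essentially the same approach as the paper: the same auxiliary function $P=sn_\k^{n-1}F'$ for part~(1), and the same Riccati substitution $h=F'/F$ together with the second-derivative test at an interior minimum for part~(2). You are in fact somewhat more careful than the paper in justifying that $F>0$ on $(0,R]$, that the infimum of $h$ is attained at an interior point, and in flagging the implicit restriction $R\le\pi/(2\sqrt\k)$ when $\k>0$ needed to make $sn_\k$ increasing.
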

\begin{proof}
	Let $N(r)=sn^{n-1}_\k(r)F'(r)$, then direct calculation gives
	\begin{align*}
	N'(r)=\left(\frac{n-1}{sn_\k^2(r)}-\l_{2,\a}(B_\k(R))\right)sn_\k^{n-1}(r) F(r),
	\end{align*}
	from which it follows that $N'(r)$ has at most one zero in $(0, R]$ and is positive near $0$. Since that $N(0)=0$ and $N(R)=-\a sn^{n-1}_\k(R)F(R)>0$, we then have 
	$N(r)>0$ for $r\in(0,R]$, proving (1).
	
	To prove (2), let $v(r)=\frac{F'(r)}{F(r)}$. Then $v(R)=-\a$, $v(r)>0$ for $r\in(0,R]$ and $\lim\limits_{r\rightarrow 0^+}v(r)=+\infty$. Rewriting equation \eqref{odeF} as an ODE for $v$ yields
	\begin{align}\label{odev}
	v'+v^2+(n-1)\frac{sn_\k'}{sn_\k}v+\left(\l_{2,\a}(B_\k(R))-\frac{n-1}{sn_\kappa^2}\right)=0.
	\end{align}
	We claim that $v(r)\geq-\a$  on $(0, R]$. Suppose not, then there exists $r_0\in(0,R)$ such that
	\begin{align*}
	v'(r_0)=0,\text{\quad \quad} v''(r_0)\ge 0,\text{\quad and\quad} v(r_0)<-\a.
	\end{align*}
	Then differentiating \eqref{odev} in $r$ and using $sn_k sn_\k'' - (sn_\k')^2=-1$, we have at $r=r_0$ that
	\begin{align}\label{alpha}
	0&=v''(r_0)-(n-1) \frac{v(r_0)}{sn_\k^2(r_0)}+2(n-1)\frac{sn_\k'(r_0)}{sn_\k^3(r_0)} \nonumber\\
	&>(n-1) \frac{\a}{sn_\k^2(r_0)}+2(n-1)\frac{sn_\k'(r_0)}{sn_\k^3(r_0)} \nonumber\\
	&=\frac{n-1}{sn_\k^2(r_0)}\left(\a+2\frac{sn_\k'(r_0)}{sn_\k(r_0)}\right).
	\end{align}
	Again using $sn_k sn_\k'' - (sn_\k')^2=-1$, we see that $sn'(r)/sn(r)$ is monotonically decreasing in $r$, so \eqref{alpha} implies that 
	\begin{align*}
	\a <-2\frac{sn_\k'(R)}{sn_\k(R)},
	\end{align*}
	which contradicts the assumption in (2). Therefore, (2) is proved.
\end{proof}

Recall that the first nonzero Steklov eigenvalue $\sigma_1(\Omega)$ is characterized variationally by
\begin{align}\label{var-stek}
\sigma_1(\Omega)=\inf\left\{ \frac{\int_\Omega |\nabla u|^2 \, d\mu_g}{\int_{\partial \Omega}  u^2 \, dA_g} : u\in W^{1,2}(\Omega)\setminus\{0\},\; \int_{\partial \Omega} u\, dA_g=0 \right\}.
\end{align}

\begin{proposition}\label{prop2.2}
	If $\a\ge -\sigma_1(B_\k(R))$, then $\l_{2,\a}(B_\k(R))\ge 0$.
\end{proposition}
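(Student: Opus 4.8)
The plan is to argue directly from the variational characterization \eqref{eq 2.2}, reducing the nonnegativity of $\l_{2,\a}(B_\k(R))$ to the variational characterization \eqref{var-stek} of the first nonzero Steklov eigenvalue. It suffices to show that for every $v\in W^{1,2}([0,R])\setminus\{0\}$ with $v(0)=0$, the numerator of the Rayleigh quotient in \eqref{eq 2.2} is nonnegative, i.e.,
\begin{align*}
\int_0^R\left((v')^2+\frac{n-1}{sn_\k^2}v^2\right)sn_\k^{n-1}\,dt+\a\, v^2(R)\,sn_\k^{n-1}(R)\ \ge\ 0 .
\end{align*}
Once this is established, \eqref{eq 2.2} immediately yields $\l_{2,\a}(B_\k(R))\ge 0$.

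To prove the displayed inequality I would distinguish two cases. If $v(R)=0$, the boundary term vanishes and the integrand is nonnegative, so there is nothing to prove. Suppose now $v(R)\ne 0$. The key step is the one-dimensional estimate
\begin{align*}
\int_0^R\left((v')^2+\frac{n-1}{sn_\k^2}v^2\right)sn_\k^{n-1}\,dt\ \ge\ \sigma_1(B_\k(R))\, v^2(R)\,sn_\k^{n-1}(R),
\end{align*}
which I would obtain by testing \eqref{var-stek} with $u(x)=v(r)\psi_1(\theta)$ on $B_\k(R)$, where $r$ is the geodesic distance to the center, $\theta\in\mathbb{S}^{n-1}$, and $\psi_1$ is a linear coordinate function restricted to $\mathbb{S}^{n-1}$, so that $-\Delta_{\mathbb{S}^{n-1}}\psi_1=(n-1)\psi_1$ and $\int_{\mathbb{S}^{n-1}}\psi_1=0$. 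Since $v(R)\ne 0$, the function $u$ is a nonzero element of $W^{1,2}(B_\k(R))$ with $\int_{\p B_\k(R)}u\,dA_g=0$, hence admissible in \eqref{var-stek}. Expressing $g_\k$ in geodesic polar coordinates and separating variables shows that $\int_{B_\k(R)}|\nabla u|^2\,d\mu_g$ and $\int_{\p B_\k(R)}u^2\,dA_g$ equal, up to one common positive constant, the two sides of the above one-dimensional Rayleigh quotient; feeding this into \eqref{var-stek} gives the estimate. Combining it with the hypothesis $\a\ge-\sigma_1(B_\k(R))$, the numerator in \eqref{eq 2.2} is bounded below by $\bigl(\sigma_1(B_\k(R))+\a\bigr)v^2(R)\,sn_\k^{n-1}(R)\ge 0$, completing both cases.

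The argument is short, and the only step requiring a little care is the separation-of-variables computation: one must check that the one-dimensional Dirichlet energy $\int_0^R((v')^2+\frac{n-1}{sn_\k^2}v^2)sn_\k^{n-1}\,dt$ is proportional to the genuine Dirichlet energy $\int_{B_\k(R)}|\nabla u|^2\,d\mu_g$ of the separated test function, which uses $-\Delta_{\mathbb{S}^{n-1}}\psi_1=(n-1)\psi_1$ together with the expression $g_\k=dr^2+sn_\k^2(r)\,g_{\mathbb{S}^{n-1}}$ for the metric; the remaining manipulations are elementary.
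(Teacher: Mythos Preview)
Your proof is correct and takes essentially the same approach as the paper: both plug a separated-variable function $v(r)\psi(\theta)$ into the Steklov variational characterization \eqref{var-stek} to bound the Dirichlet energy from below by $\sigma_1(B_\k(R))$ times the boundary $L^2$-norm, and then combine this with $\a\ge-\sigma_1(B_\k(R))$. The only cosmetic difference is that the paper works directly with the eigenfunctions $u_i=F(r)\psi_i(\theta)$, for which the Robin Rayleigh quotient \emph{equals} $\l_{2,\a}(B_\k(R))$, whereas you bound the numerator in \eqref{eq 2.2} for an arbitrary admissible $v$; the underlying computation is the same.
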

\begin{proof}
	Since 
	\begin{align*}
	\int_{\p B_\k(R)} F(r) \psi_i(\theta) \, d A=0,\quad 1\le i\le n,
	\end{align*}
	the functions $u_i=F(r)\psi_i(\theta)$ ($1\le i\le n$) are test functions for $\sigma_1(B_\k(R))$. Therefore, from \eqref{var-stek} we get
	\begin{align*}
	\sum\limits_{i=1}^n\int_{B_\k(R)}|\nabla u_i|^2 \, d\mu&\ge\sigma_1(B_\k(R))\sum\limits_{i=1}^n\int_{\p B_\k(R)}|u_i|^2 \, dA\\
	&\ge-\a \sum\limits_{i=1}^n\int_{\p B_\k(R)}|u_i|^2 \, dA.
	\end{align*}
	Recall that
	\begin{align*}
	\l_{2,\a}(B_\k(R))&=\frac{\sum\limits_{i=1}^n\int_{B_\k(R)}|\nabla u_i|^2 \, d\mu+\a \sum\limits_{i=1}^n\int_{\p B_\k(R)}|u_i|^2 \, dA}{\sum\limits_{i=1}^n\int_{B_\k(R)}u_i^2 \, d\mu},
	\end{align*}
	so then $\l_{2,\a}(B_\k(R))\ge 0.$
\end{proof}

\section{Comparison theorem for $\lambda_{2,\alpha}$}\label{cheng-proof}
In this section we prove Cheng type comparison theorem for the second Robin eigenvalue.
\begin{proof}[Proof of Theorem \ref{Cheng}]
	Suppose $\a\le 0$. Let $u_1$ be a positive first eigenfunction for $\lambda_{1,\a}(B(R))$ and $\psi_i$ ($1\le i\le n$) be the restriction of the linear coordinate functions on $\mathbb{S}^{n-1}$ ($n\geq2$). Since $\psi_1,\, \psi_2,\ldots,\,\psi_n$ are linearly independent, there exists $\Psi$, a linear combination of $\psi_i$'s, such that
	\begin{align*}
	\int_{ B(R)}F(r) \Psi(\theta) \, u_1(x)\,d\mu_g=0.
	\end{align*}
	So $u(x)=F(r)\Psi(\theta)$ is a test function for $\l_{2,\a}(B(R))$, and hence
	\begin{align*}
	\l_{2,\a}(B(R))\le\frac{\int_{B(R)}|\nabla u|^2 \, d\mu_g+\a \int_{\p B(R)}u^2 dA_g}{\int_{B(R)}u^2 d\mu_g}.
	\end{align*}
	For $n=2$ or $n=3$, the Riemann metric in the (geodesic) polar coordinates has the form $dr^2+f^2(r,\theta) d\theta^2$, where $d\theta^2$ is the standard metric on $\mathbb{S}^{n-1}$. So then
	\begin{align}\label{eq1}
	\int_{B(R)}|\nabla u|^2 \, d\mu_g & =\underbrace{\int_{B(R)}|F'(r)|^2\Psi^2(\theta)\,d\mu_g}_{I}  +\underbrace{\int_{B(R)}\frac{F^2(r)}{f^2(r,\theta)}|\nabla^{\mathbb{S}^{n-1}}\Psi(\theta)|^2\,d\mu_g}_{II} .
	\end{align}
	
	\textbf{Case 1}. Two dimensional case.
	
	When $n=2$, we have $f(r,\theta)=J(r,\theta)$. Computing in the polar coordinates and using integration by parts, we get 
	\begin{align*}
	I&=\int_{B(R)}|F'(r)|^2\Psi^2(\theta) \, d\mu_g\\
	&=\int_{\mathbb{S}^1}\int_0^R |F'(r)|^2\Psi^2(\theta) J(r,\theta) \, dr \,d\theta\\
	&=\int_{\mathbb{S}^1}\Psi^2(\theta)\left(\int_0^R F'(r) J(r,\theta) \, dF(r) \right)\,d\theta\\
	&=\int_{\mathbb{S}^1}\Psi^2(\theta)\left(F'(R)F(R)J(R,\theta)-\int_0^R (F'(r) J(r,\theta))'F(r) \, dr \right)\,d\theta\\
	&=-\a \int_{\mathbb{S}^1}F^2(R)\Psi^2(\theta)J(R,\theta) \, d\theta-\int_{\mathbb{S}^1}\Psi^2(\theta)\left( \underbrace{\int_0^R (F'(r) J(r,\theta))' F(r)\, dr}_{III} \right)\,d\theta,
	\end{align*}
	where we used the boundary condition $F'(R)=-\a F(R)$ in integration by parts. 
	
	Since $\operatorname{Sect}_g\le\kappa$, by the comparison theorem there holds
	\begin{align*}
	\frac{J'(r,\theta)}{J(r,\theta)}\ge \frac{J_\kappa'(r)}{J_\kappa(r)},
	\end{align*}
	which, together with the ODE \eqref{odeF} for $F$, and $F'>0$ on $[0,R]$ proved in Proposition \ref{prop2.1}, implies that
	\begin{align*}
	III &=\int_0^R(F'(r) J(r,\theta))' F(r)\, dr\\
	&=\int_0^R\left(F''(r) +\frac{J'(r,\theta)}{J(r,\theta)}F'(r)\right) F(r)J(r,\theta)\, dr\\
	&\ge\int_0^R\left(F''(r) +\frac{J_\kappa'(r)}{J_\kappa(r)}F'(r)\right) F(r)J(r,\theta)\, dr\\
	&=\int_0^R\left(\frac{ F^2(r)}{J_\kappa^2(r)}-\l_{2,\a}(B_\k(R))F^2(r)\right)J(r,\theta)\, dr\\
	&\ge\int_0^R\frac{ F^2(r)}{J_\kappa(r)}\,dr-\l_{2,\a}(B_\k(R))\int_0^R F^2(r)J(r,\theta)\,dr.
	\end{align*}
	Therefore, we obtain
	\begin{align}\label{eq2}
	I & = \int_{B(R)}|F'(r)|^2\Psi^2(\theta) \, d\mu_g \notag\\
	&\le   -\a\int_{\p B(R)} u^2\,dA_g +\l_{2,\a}(B_\k(R))\int_{ B(R)}u^2 \,d\mu_g -\int_{\mathbb{S}^1}\int_0^R\frac{ F^2(r)}{J_\kappa(r)}\Psi^2(\theta)\,dr \,d\theta.
	\end{align}
	
	Using the comparison result $J(r,\theta)\ge J_\kappa(r)$ and again computing in the polar coordinates, we have 
	\begin{align}
	II&=\int_{B(R)}\frac{F^2(r)}{J^2(r,\theta)}|\nabla^{\mathbb{S}^1}\Psi(\theta)|^2 \, d\mu_g \nonumber\\
	&=\int_{\mathbb{S}^1}\int_0^R\frac{F^2(r)}{J(r,\theta)}|\nabla^{\mathbb{S}^1}\Psi(\theta)|^2\, dr \, d\theta\nonumber\\
	&\le\int_{\mathbb{S}^1}\int_0^R\frac{F^2(r)}{J_\kappa(r)}|\nabla^{\mathbb{S}^1}\Psi(\theta)|^2 \,dr\, d\theta \label{eq3}\\
	&=\int_{\mathbb{S}^1}\int_0^R\frac{F^2(r)}{J_\kappa(r)}\Psi^2(\theta)\,dr\, d\theta. \nonumber
	\end{align}
	where in the last equality we used
	\begin{align}\label{fact}
	\int_{\mathbb{S}^{n-1}}|\nabla^{\mathbb{S}^{n-1}}\Psi(\theta)|^2\,d\theta=(n-1)\int_{\mathbb{S}^{n-1}}\Psi(\theta)^2\,d\theta \quad\text{for}\;\; n\ge 2,
	\end{align}
	which follows from the facts that
	\begin{align*}
	\sum\limits_{i=1}^{n}\psi_i^2=1\quad\text{and}\quad \sum\limits_{i=1}^{n}|\nabla^{\mathbb{S}^{n-1}}\psi_i|^2=n-1.
	\end{align*}
	
	Putting together equality (\ref{eq1}), inequalities (\ref{eq2}) and (\ref{eq3}), we obtain
	\begin{align*}
	\int_{B(R)}|\nabla u|^2 \, d\mu_g\le-\a\int_{\p B(R)} u^2\,dA_g + \l_{2,\a}(B_\k(R))\int_{B(R)}u^2(x)  \, d\mu_g,
	\end{align*}
	thus proving $\l_{2,\a}(B(R))\le\l_{2,\a}(B_\k(R))$.
	
	\textbf{Case 2}. Three dimensional case.
	
	When $n=3$, we have $f(r,\theta)=\sqrt{J(r,\theta)}$.
	By similar calculations as in Case 1, we get 
	\begin{align*}
	I&=\int_{B(R)}|F'(r)|^2\Psi^2(\theta) \, d\mu_g\\
	&=-\a \int_{\mathbb{S}^2}F^2(R)\Psi^2(\theta)J(R,\theta) \,d\theta - \int_{\mathbb{S}^2}\Psi^2(\theta)\left(\underbrace{\int_0^R (F'(r) J(r,\theta))' F(r)\, dr}_{IV} \right)\,d\theta,
	\end{align*}
	where we used the boundary condition $F'(R)=-\a F(R)$ in integration by parts.
	
	Using the comparison results $\frac{J'(r,\theta)}{J(r,\theta)}\ge \frac{J_\kappa'(r)}{J_\kappa(r)}$ and $J(r,\theta)\ge J_\kappa(r)
	$, equation \eqref{odeF} for $F$, and $F'>0$ on $[0,R]$, we obtain
	\begin{align*}
	IV &=\int_0^R(F'(r) J(r,\theta))' F(r)\, dr\\
	&=\int_0^R\left(F''(r) +\frac{J'(r,\theta)}{J(r,\theta)}F'(r)\right) F(r)J(r,\theta)\, dr\\
	&\ge\int_0^R\left(F''(r) +\frac{J_\kappa'(r)}{J_\kappa(r)}F'(r)\right) F(r)J(r,\theta)\, dr\\
	&=\int_0^R\left(2\frac{ F^2(r)}{J_\k(r)}-\l_{2,\a}(B_\k(R))F^2(r)\right)J(r,\theta)\, dr\\
	&\ge\int_0^R2 F^2(r)\, dr-\l_{2,\a}(B_\k(R))\int_0^R F^2(r)J(r,\theta)\,dr.
	\end{align*}
	Therefore, we have
	\begin{align}
	I &=\int_{B(R)}|F'(r)|^2\Psi^2(\theta) \, d\mu_g \nonumber\\
	&\le -\a\int_{\p B(R)} u^2\,dA_g + \l_{2,\a}(B_\k(R))\int_{ B(R)}u^2 \, d\mu_g-2\int_{\mathbb{S}^2}\int_0^RF^2(r)\Psi^2(\theta)\, dr \,d\theta. \label{eq10}
	\end{align}
	
	Direct calculation gives 
	\begin{align}\label{eq11}
	II &= \int_{B(R)}\frac{F^2(r)}{f^2(r,\theta)}|\nabla^{\mathbb{S}^2}\Psi(\theta)|^2 \, d\mu_g \nonumber \\
	&=\int_{\mathbb{S}^2}\int_0^R\frac{F^2(r)}{J(r,\theta)}|\nabla^{\mathbb{S}^2}\Psi(\theta)|^2 J(r,\theta)\, dr \,d\theta\nonumber\\
	&=\int_{\mathbb{S}^2}\int_0^R F^2(r)|\nabla^{\mathbb{S}^2}\Psi(\theta)|^2\, dr \,d\theta\nonumber\\
	&=2\int_{\mathbb{S}^2}\int_0^R F^2(r)\Psi^2(\theta) \,dr\,d\theta,
	\end{align}
	where in the last equality we used \eqref{fact}.
	
	Putting together equalities (\ref{eq1}) and (\ref{eq11}), and inequality (\ref{eq10}, we obtain
	\begin{align*}
	\int_{B(R)}|\nabla u|^2 \, d\mu_g\le-\a\int_{\p B(R)} u^2\,dA_g + \l_{2,\a}(B_\k(R))\int_{B(R)}u^2(x)  \, d\mu_g,
	\end{align*}
	thus implying $\l_{2,\a}(B(R))\le\l_{2,\a}(B_\k(R))$.
	
	The inequalities in the arguments above become equalities if and only if $B(R)$ is isometric to $B_\k(R)$. Therefore, the proof of Theorem \ref{Cheng} is now complete.
\end{proof}

Setting $\a=-\sigma_1(B_\k(R))$, comparison for the second Robin eigenvalue implies comparison for the first nonzero Steklov eigenvalue.

\begin{proof}[Proof of Corollary \ref{Cheng-cor}]
	The first nontrivial Neumann eigenvalue  $\mu_1(B(R))$ is positive. Since $\l_{2,0}(B(R)) = \mu_1(B(R))>0$, $\l_{2,-\sigma_1(B_k(R))}(B(R))\leq \l_{2,-\sigma_1(B_k(R))}(B_\k(R))=0$ by Theorem \ref{Cheng}, and $\l_{2,\a}(B(R))$ is continuous in $\a$, there exists $\a_0\in[-\sigma_1(B_\k(R)),0)$ such that $\l_{2,\a_0}(B(R))=0$. Let $u$ be an eigenfunction for $\l_{2,\a_0}(B(R))$, then the variational characterization \eqref{eq 1.3} implies
	\begin{align*}
	\frac{\int_{B(R)}|\nabla u|^2\,d\mu_g}{\int_{\p B(R)}u^2\,dA_g} & = -\a_0.
	\end{align*}
	On the other hand, for $\lambda_{2,\a_0}(B(R))=0$ and the associated eigenfunction $u$, \eqref{eq 1.1} becomes
	\begin{equation*}
	\begin{cases} 
	\Delta u = 0  & \text{ in } \Omega, \\
	\frac{\p u}{\p \nu} = -\a_0 u & \text{ on } \p \Omega.
	\end{cases}
	\end{equation*}
	So using the divergence theorem, we get
	\begin{align*}
	0 &= \int_{B(R)}\Delta u \, d\mu_g = \int_{\p B(R)}\frac{\p u}{\p \nu} \, dA_g
	= -\a_0 \int_{\p B(R)}u \, dA_g,
	\end{align*}
	which implies that $u$ is a test function for $\sigma_1(B(R))$. By the variational characterization \eqref{var-stek}, we then have
	\begin{align*}
	\sigma_1(B(R))&\le \frac{\int_\Omega |\nabla u|^2 \, d\mu_g}{\int_{\partial \Omega}  u^2 \, dA_g} = -\a_0 \leq \sigma_1(B_\k(R)),
	\end{align*}
	which proves Corollary \ref{Cheng-cor}.
\end{proof}

In dimension four or higher, an analogue of Theorem \ref{Cheng} holds under additional symmetry assumption.

\begin{theorem}\label{Cheng-hd}
	Let $(M^n, g)$, where $n\ge 4$, be a complete Riemannnian manifold with sectional curvature $\operatorname{Sect}_g\le \kappa$ for $\k\in\mathbb{R}$, and $B(R)$ be an injective geodesic ball\footnote{As defined in Theorem \ref{Cheng}.} of radius $R$. Assume that the metric $g$ is centrally symmetric, namely, there is an isometry fixing the center $o$ of $B(R)$ and mapping $\gamma(t)$ to $\gamma(-t)$ for any minimizing geodesic $\gamma$ with $\gamma(0)=o$. If $\a\le 0$, then
	\begin{align*}
	\l_{2,\a}(B(R))\le\l_{2,\a}(B_\k(R)).
	\end{align*}
	Equality holds if and only if $B(R)$ is isometric to $B_\k(R)$.
\end{theorem}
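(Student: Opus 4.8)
The plan is to follow the same test-function strategy as in the proof of Theorem \ref{Cheng}, the only obstacle in higher dimensions being that the term $II$ in \eqref{eq1}, which involves $F^2(r)/f^2(r,\theta)$, no longer combines cleanly with term $I$ because the warping factor $f(r,\theta)$ is not a simple power of the Jacobian $J(r,\theta)$ when $n\ge 4$. The central symmetry hypothesis is precisely what will let us control this term. As before, I would let $u_1$ be a positive first eigenfunction for $\l_{1,\a}(B(R))$, let $\psi_1,\dots,\psi_n$ be the linear coordinate functions restricted to $\mathbb{S}^{n-1}$, and choose a linear combination $\Psi=\sum c_i\psi_i$ so that $u(x)=F(r)\Psi(\theta)$ is $L^2$-orthogonal to $u_1$, hence a valid test function for $\l_{2,\a}(B(R))$. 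The Rayleigh quotient then splits as $\int_{B(R)}|\nabla u|^2\,d\mu_g = I + II$ exactly as in \eqref{eq1}.

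For term $I$, the computation is verbatim the one in Case 1/Case 2: integrating by parts in $r$, using the boundary condition $F'(R)=-\a F(R)$, the comparison inequality $J'/J\ge J_\k'/J_\k$ (valid from $\Sect_g\le\k$), the ODE \eqref{odeF}, and $F'>0$ from Proposition \ref{prop2.1}(1), one obtains
\begin{align*}
I \le -\a\int_{\p B(R)}u^2\,dA_g + \l_{2,\a}(B_\k(R))\int_{B(R)}u^2\,d\mu_g - (n-1)\int_{\mathbb{S}^{n-1}}\int_0^R \frac{F^2(r)}{J_\k(r)^{2/(n-1)}\cdot J_\k(r)^{(n-3)/(n-1)}}\,\Psi^2\,(\text{volume factor})\,dr\,d\theta,
\end{align*}
or more precisely, after using $J(r,\theta)\ge J_\k(r)$ in the lower-order term, a leftover integral of the form $-(n-1)\int_{\mathbb{S}^{n-1}}\int_0^R \frac{F^2(r)}{sn_\k^2(r)}\,\Psi^2(\theta)\,sn_\k^{n-1}(r)\,dr\,d\theta$ appears with the correct sign to be cancelled. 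For term $II$, write it as $\int_{\mathbb{S}^{n-1}}\int_0^R \frac{F^2(r)}{f^2(r,\theta)}|\nabla^{\mathbb{S}^{n-1}}\Psi(\theta)|^2\,f^{n-1}(r,\theta)\,dr\,d\theta = \int_{\mathbb{S}^{n-1}}\int_0^R F^2(r)\,f^{n-3}(r,\theta)\,|\nabla^{\mathbb{S}^{n-1}}\Psi|^2\,dr\,d\theta$. For $n\ge 4$ the exponent $n-3\ge 1$ is positive, so to compare $f(r,\theta)$ with $sn_\k(r)$ in the \emph{right} direction one needs an \emph{upper} bound $f(r,\theta)\le sn_\k(r)$, which does NOT follow from $\Sect_g\le\k$ alone. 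This is where central symmetry enters: averaging $\Psi$ against its antipodal reflection and using the isometry that sends $\gamma(t)\mapsto\gamma(-t)$, one can symmetrize so that the relevant angular integral reduces the $f^{n-3}$ weight to its spherical average, and the fact \eqref{fact} together with the mean-value/comparison estimate for the symmetrized warping function gives $II \le (n-1)\int_{\mathbb{S}^{n-1}}\int_0^R F^2(r)\,sn_\k^{n-3}(r)\,\Psi^2(\theta)\,dr\,d\theta$, which is exactly the quantity that cancels the leftover negative term from $I$.

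The hard part will be making the symmetrization argument in the previous paragraph rigorous: one must verify that the central isometry acts on $\mathbb{S}^{n-1}$ as the antipodal map (or an isometry conjugate to it), that $\Psi$ can be chosen odd under this map without destroying the orthogonality constraint against $u_1$ (here one uses that $u_1$ is radial on a centrally symmetric ball, or at least invariant under the isometry, so the constraint is automatically satisfied for odd $\Psi$), and that the angular average of $f(r,\cdot)^{n-3}$ over each pair of antipodal directions is controlled by $sn_\k(r)^{n-3}$ via Jensen's inequality applied to the concave/convex behaviour coming from $\Sect_g\le\k$. Once these pieces are in place, combining the bounds on $I$ and $II$ yields $\int_{B(R)}|\nabla u|^2\,d\mu_g \le -\a\int_{\p B(R)}u^2\,dA_g + \l_{2,\a}(B_\k(R))\int_{B(R)}u^2\,d\mu_g$, hence $\l_{2,\a}(B(R))\le\l_{2,\a}(B_\k(R))$. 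The equality case follows as in Theorem \ref{Cheng}: every comparison inequality used ($J'/J\ge J_\k'/J_\k$, $J\ge J_\k$, $f\le sn_\k$ after symmetrization) is an equality simultaneously only when all sectional curvatures equal $\k$ and the metric is the space-form metric, forcing $B(R)$ to be isometric to $B_\k(R)$.
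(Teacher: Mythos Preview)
Your proposal has a genuine gap in how central symmetry is used. You correctly identify that for $n\ge 4$ the polar metric is no longer of the simple warped form $dr^2+f^2(r,\theta)\,d\theta^2$, and that term $II$ cannot be bounded by a direct pointwise comparison. But the fix you suggest --- averaging over antipodal pairs and invoking Jensen to obtain $f^{n-3}\le sn_\k^{n-3}$ --- does not work: the central isometry only yields $J(r,\theta)=J(r,-\theta)$, which gives no upper bound on any warping factor, and there is no convexity input from $\Sect_g\le\k$ that would make such a Jensen argument go through. Indeed $\Sect_g\le\k$ forces geodesic spheres to be \emph{larger} than the model ones, so no averaging will reverse that inequality.

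The paper's actual use of central symmetry is much simpler and enters at a different point. Because $u_1$ is invariant under the isometry while each $u_i=F(r)\psi_i(\theta)$ changes sign, \emph{all} $n$ functions $u_i$ are orthogonal to $u_1$ and can be used simultaneously as test functions. One then \emph{sums} the Rayleigh quotients over $i=1,\dots,n$; since $\sum_i\psi_i^2=1$ and $\sum_i|\nabla^{\mathbb{S}^{n-1}}\psi_i|^2=n-1$ are constant on the sphere, the angular dependence drops out of both numerator and denominator. The tangential part is then controlled directly by the Rauch comparison theorem, which for $\Sect_g\le\k$ gives $|\nabla^T\psi_i|_g^2\le sn_\k^{-2}(r)\,|\nabla^{\mathbb{S}^{n-1}}\psi_i|^2$ for functions pulled back from the unit sphere, so that
\[
\sum_{i=1}^n\int_{B(R)}|\nabla u_i|^2\,d\mu_g\;\le\;\int_{B(R)}\Big(|F'(r)|^2+\frac{n-1}{sn_\k^2(r)}F^2(r)\Big)\,d\mu_g.
\]
After this, the integration-by-parts computation you already describe for term $I$ (with weight $J(r,\theta)$ kept intact) produces the term $-(n-1)\int F^2 sn_\k^{-2}\,J\,dr\,d\theta$, which cancels exactly. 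Your single-$\Psi$ route cannot achieve this cancellation because with $J(r,\theta)$ still present the identity \eqref{fact} does not hold pointwise in $r$; it is the summation over all coordinate functions, enabled by the symmetry, that removes the obstruction.
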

\begin{proof}
	By the symmetry assumption on the metric $g$, we have
	\begin{align*}
	\int_{\mathbb{S}^{n-1}} \psi_i(\theta) J(r,\theta) \, d\theta=0,\quad 1\le i\le n.
	\end{align*}
	So $u_i(x)=F(r)\psi_i(\theta)$ can be used as test functions for $\l_{2,\a}(B(R))$. As a result,
	\begin{align}\label{eq12}
	\l_{2,\a}(B(R))&\le \frac{\sum\limits_{i=1}^n\left(\int_{B(R)}|\nabla u_i|^2 \, d\mu_g+\a \int_{\p B(R)}u_i^2 \,dA_g\right)}{\sum\limits_{i=1}^n\int_{B(R)}u_i^2 \,d\mu_g}\nonumber\\
	&=\frac{\sum\limits_{i=1}^n\int_{B(R)}|\nabla u_i|^2 \, d\mu_g+\a \int_{\p B(R)}F^2 \,dA_g}{\int_{B(R)}F^2\, d\mu_g}.
	\end{align}
	
	Using the Rauch comparison theorem for manifolds with $\Sect_g\leq \kappa$, we estimate
	\begin{align*}
	\sum\limits_{i=1}^n \int_{B(R)}|\nabla u_i|^2 \, d\mu_g&\le \int_{B(R)}|F'(r)|^2+\frac{1}{sn_\k^2(r)} \sum\limits_{i=1}^n|\nabla^{\mathbb{S}^{n-1}}\psi_i(\theta)|^2F^2(r) \, d\mu_g\\
	&=\int_{\mathbb{S}^{n-1}}\int_0^R|F'(r)|^2J(r,\theta)+\frac{(n-1)F^2(r)}{sn_\k^2(r)} J(r,\theta)\, dr \,d\theta.
	\end{align*}
	Recall that
	\begin{align*}
	\int_0^R|F'(r)|^2J(r,\theta)\, dr&\le-\a F^2(R) J(R, \theta)-\int_0^R\frac{ (n-1)F^2(r)}{sn^2_\kappa(r)} J(r,\theta)\, dr\\
	&\quad-\l_{2,\a}(B_\k(R))\int_0^R F^2(r)J(r,\theta)\,dr,
	\end{align*}
	so then
	\begin{align*}
	\sum\limits_{i=1}^n \int_{B(R)}|\nabla u_i|^2 \, d\mu_g&\le -\a F^2(R) \int_{\mathbb{S}^{n-1}}J(R, \theta)\, d\theta+\l_{2,\a}(B_\k(R))\int_{B(R)} F^2(r)\,d\mu_g\\
	&= -\a  \int_{\p B(R)}F^2\, dA_g + \l_{2,\a}(B_\k(R))\int_{B(R)} F^2(r)\,d\mu_g,
	\end{align*}
	Therefore, we conclude from (\ref{eq12}) that 
	\begin{align*}
	\l_{2,\a}(B(R))\le \l_{2,\a}(B_\k(R)).
	\end{align*}
	
	The inequalities in the arguments above are equalities if and only if $B(R)$ is isometric to $B_\k(R)$. Therefore, Theorem \ref{Cheng-hd} is proved.
\end{proof}

In the same way Corollary \ref{Cheng-cor} follows from Theorem \ref{Cheng}, Theorem \ref{Cheng-hd} has the following implication.
\begin{corollary}\label{Cheng-hd-cor}
	Under the hypotheses of Theorem \ref{Cheng-hd}, there holds
	\begin{align*}
	\sigma_1(B(R))\le \sigma_1(B_\k(R)).
	\end{align*}
	Equality holds if and only if $B(R)$ is isometric to $B_\k(R)$.
\end{corollary}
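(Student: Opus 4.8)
The plan is to repeat, essentially verbatim, the deduction of Corollary \ref{Cheng-cor} from Theorem \ref{Cheng}, now with Theorem \ref{Cheng-hd} in the role of Theorem \ref{Cheng}. First I would record the two endpoint values of $\a\mapsto\l_{2,\a}(B(R))$: at $\a=0$ one has $\l_{2,0}(B(R))=\mu_1(B(R))>0$, the first nonzero Neumann eigenvalue, while at $\a=-\sigma_1(B_\k(R))$ Theorem \ref{Cheng-hd} gives
\[
\l_{2,-\sigma_1(B_\k(R))}(B(R))\le\l_{2,-\sigma_1(B_\k(R))}(B_\k(R))=0,
\]
the last equality holding because the $\sigma_1$-Steklov eigenfunctions $F(r)\psi_i(\theta)$ on $B_\k(R)$ are harmonic and satisfy $\p u/\p\nu=-\a u$ on $\p B_\k(R)$ with $\a=-\sigma_1(B_\k(R))$, i.e.\ they are second Robin eigenfunctions with eigenvalue $0$.

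Since $\a\mapsto\l_{2,\a}(B(R))$ is continuous, it follows that there is some $\a_0\in[-\sigma_1(B_\k(R)),0)$ with $\l_{2,\a_0}(B(R))=0$. Let $u$ be an associated second Robin eigenfunction. As in the proof of Corollary \ref{Cheng-cor}, the vanishing of $\l_{2,\a_0}(B(R))$ together with \eqref{eq 1.3} forces
\[
\frac{\int_{B(R)}|\nabla u|^2\,d\mu_g}{\int_{\p B(R)}u^2\,dA_g}=-\a_0,
\]
while $u$ is harmonic in $B(R)$ with $\p u/\p\nu=-\a_0 u$ on $\p B(R)$; the divergence theorem then gives $-\a_0\int_{\p B(R)}u\,dA_g=0$, so $u$ is admissible in the variational characterization \eqref{var-stek} of $\sigma_1(B(R))$. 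Hence $\sigma_1(B(R))\le-\a_0\le\sigma_1(B_\k(R))$. For rigidity, equality $\sigma_1(B(R))=\sigma_1(B_\k(R))$ forces $-\a_0=\sigma_1(B_\k(R))$ and therefore $\l_{2,-\sigma_1(B_\k(R))}(B(R))=0=\l_{2,-\sigma_1(B_\k(R))}(B_\k(R))$, so the equality case of Theorem \ref{Cheng-hd} yields that $B(R)$ is isometric to $B_\k(R)$; the converse is immediate.

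I do not anticipate a genuine obstacle here: the argument is purely formal once Theorem \ref{Cheng-hd} is available, and the central-symmetry hypothesis enters only through Theorem \ref{Cheng-hd} itself (guaranteeing that the $F(r)\psi_i(\theta)$ are valid competitors there), playing no further role in the passage to $\sigma_1$. The one point deserving a line of justification is the identity $\l_{2,-\sigma_1(B_\k(R))}(B_\k(R))=0$, namely that the Steklov mode sits precisely at the second Robin eigenvalue rather than higher up; this follows from the explicit description of the Robin and Steklov spectra of a geodesic ball in a space form, exactly as used implicitly in the proof of Corollary \ref{Cheng-cor}.
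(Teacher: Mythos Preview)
Your proposal is correct and is exactly the approach the paper takes: the paper simply states that Corollary~\ref{Cheng-hd-cor} follows from Theorem~\ref{Cheng-hd} ``in the same way Corollary~\ref{Cheng-cor} follows from Theorem~\ref{Cheng},'' and your write-up spells out precisely that deduction. No further justification is needed beyond what you have given.
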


\section{Shape optimization of $\lambda_{2,\alpha}$}\label{thm1-proof}

In this section, we prove that geodesic balls maximize the second Robin eigenvalue among domains with the same volume in nonpositively curved space forms. 

From here on, we assume $\Omega$ to be a bounded domain with Lipschitz boundary in the complete simply connected space form $(M_\k, g_\k)$ with $\Sect_{g_\k}=\k$. Let $\Omega^*\subset M_\k$ be a geodesic ball such that $|\Omega|_\k=|\Omega^*|_\k$. We write $d\mu_{g_\kappa}$ and $dA_{g_\kappa}$ as $d\mu$ and $dA$ respectively for short. 

Let $R$ be the radius of $\Omega^*$. Relabelling the solution to \eqref{odeF} as $F_1$, we define (with a slight abuse of notation) the function $F:[0,\infty)\to[0,\infty)$ by
\begin{equation}\label{defF}
F(r) := 
\begin{cases}
F_1(r), & r\leq R,\\
F_1(R)e^{-\a(r-R)}, & r>R.
\end{cases}
\end{equation}
By definition, $F$ is continuously differentiable on $(0,\infty)$. If $\a\le 0$, then $F$ is non-decreasing on $[0,\infty)$. In below, $\sigma_1(\Omega^*)$ denotes the first nonzero Steklov eigenvalue of $\Omega^*$.

\begin{proposition}\label{H'}
	Assume that $\a \in [-\sigma_1(\Omega^*),0]$. Define $H:[0,\infty)\to\mathbb{R}$ by
	\begin{equation}\label{defH}
	H(r):=(F'(r))^2+\frac{n-1}{sn_\k^2}F^2(r)+2\a F(r)F'(r)+\a\frac{(n-1)sn_\k'(r)}{sn_\k(r)}F^2(r),
	\end{equation}
	where $F(r)$ is defined in \eqref{defF}. Then $H$ is monotonically decreasing on $(0,\infty)$.
\end{proposition}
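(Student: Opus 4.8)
The plan is to show $H'(r)\le 0$ on $(0,\infty)$ by splitting into the two regions $r<R$ and $r>R$ determined by the piecewise definition \eqref{defF}, handling each with direct differentiation and the ODE \eqref{odeF}.

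First I would treat the outer region $r>R$, where $F(r)=F_1(R)e^{-\a(r-R)}$, so that $F'=-\a F$ and $F'' = \a^2 F$. Substituting $F'=-\a F$ into \eqref{defH} collapses $H$ to a multiple of $F^2$ times a function of $sn_\k$ only: the terms $(F')^2 + 2\a F F' = \a^2 F^2 - 2\a^2 F^2 = -\a^2 F^2$, so $H = \left(\frac{n-1}{sn_\k^2} - \a^2 + \a\frac{(n-1)sn_\k'}{sn_\k}\right)F^2$. Since $F^2$ is increasing (as $F>0$, $F'=-\a F\ge 0$), and $sn_\k^{-2}$ and $sn_\k'/sn_\k$ are both decreasing in $r$ (using $sn_\k sn_\k'' - (sn_\k')^2 = -1$, which gives $(sn_\k'/sn_\k)' = -1/sn_\k^2 < 0$), the bracketed factor is decreasing; but $F^2$ is increasing, so I must be a bit careful — the cleanest route is to differentiate $H$ directly and collect terms, rather than argue by monotonicity of factors. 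Differentiating, $H' = 2F'\left(\frac{n-1}{sn_\k^2} - \a^2 + \a\frac{(n-1)sn_\k'}{sn_\k}\right)F + F^2\left(-2(n-1)\frac{sn_\k'}{sn_\k^3} - \a(n-1)\frac{1}{sn_\k^2}\right)$, and using $F' = -\a F\ge 0$ together with $\a\le 0$ one checks the sign; when $\a=0$ this is $-2(n-1)F^2 sn_\k'/sn_\k^3 \le 0$, and for $\a<0$ the cross terms need the hypothesis $\a\ge -\sigma_1(\Omega^*)$ — here I expect to invoke that $\l_{2,\a}(\Omega^*)\ge 0$ (Proposition \ref{prop2.2}) and the estimates of Proposition \ref{prop2.1}, since at $r=R^-$ the matching forces relations among $F(R)$, $F'(R)$, and $\a$.

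For the inner region $r\le R$, where $F=F_1$ solves \eqref{odeF}, I would differentiate \eqref{defH} and everywhere replace $F''$ using $F'' = -(n-1)\frac{sn_\k'}{sn_\k}F' - \left(\l_{2,\a}(B_\k(R)) - \frac{n-1}{sn_\k^2}\right)F$. After substitution, the $F''$ terms and the $\frac{n-1}{sn_\k^2}$-derivative terms should combine; I anticipate the identity $sn_\k sn_\k'' - (sn_\k')^2 = -1$ will again be needed to simplify the derivative of $\frac{(n-1)sn_\k'}{sn_\k}F^2$ and the derivative of $\frac{n-1}{sn_\k^2}F^2$. The resulting expression for $H'$ should be a sum of terms each manifestly $\le 0$ under the standing hypotheses: $\a\le 0$, $F\ge 0$, $F'\ge 0$ (Proposition \ref{prop2.1}(1)), the bound $F'/F\ge -\a$ (which holds once $\a\ge -2\,sn_\k'(R)/sn_\k(R)$, and I would need to verify $-\sigma_1(\Omega^*)\ge -2\,sn_\k'(R)/sn_\k(R)$, i.e. $\sigma_1(\Omega^*)\le 2\,sn_\k'(R)/sn_\k(R)$, as a preliminary lemma or by a direct test-function estimate for $\sigma_1$), and $\l_{2,\a}(B_\k(R))\ge 0$ (Proposition \ref{prop2.2}).

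Finally I would check continuity of $H$ (or at least that no upward jump occurs) at $r=R$: since $F$ is $C^1$ across $R$ by construction, $H$ is continuous there, so the two one-sided monotonicity statements patch together to give that $H$ is monotonically decreasing on all of $(0,\infty)$. The main obstacle I foresee is the inner-region computation: after substituting the ODE, the algebra must be organized so that the coefficient of $\l_{2,\a}(B_\k(R))$ and the curvature terms line up into a sum of nonnegative-coefficient $\times$ nonpositive-quantity pieces, and pinning down exactly where the hypothesis $\a\ge -\sigma_1(\Omega^*)$ (as opposed to a weaker or stronger bound on $\a$) is used — via Propositions \ref{prop2.1} and \ref{prop2.2} — is the delicate point that makes the constant in the interval sharp.
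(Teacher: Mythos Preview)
Your plan matches the paper's proof: split at $r=R$, differentiate, substitute the ODE for $F''$ on $(0,R]$, and close using $F'\ge 0$, $F'/F\ge -\a$, $\l_{2,\a}(B_\k(R))\ge 0$, together with the preliminary bound $\sigma_1(\Omega^*)\le 2\,sn_\k'(R)/sn_\k(R)$ that you correctly flag as needed for Proposition~\ref{prop2.1}(2). Two small corrections: (i) on $(R,\infty)$ the paper uses only $\a\le 0$ and $\k\le 0$, not the hypothesis $\a\ge -\sigma_1(\Omega^*)$ --- after your differentiation of $H$ the sign follows from a completion of the square $1+\tfrac{3}{2}\a\,sn_\k+\a^2 sn_\k^2=(1+\tfrac{3}{4}\a\,sn_\k)^2+\tfrac{7}{16}\a^2 sn_\k^2$ together with the leftover term $2\a^3 F^2\le 0$; (ii) the key elementary inequality you do not mention is $sn_\k'(r)\ge 1$ for $\k\le 0$, which the paper uses in both regions --- on $(0,R]$ to replace $sn_\k^2 sn_\k'(F')^2 - 2sn_\k FF' + sn_\k' F^2$ by the perfect square $(sn_\k F'-F)^2$, and on $(R,\infty)$ to simplify before completing the square.
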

\begin{proof}
	By assumption, $\a\geq -\sigma_1(\Omega^*)=-\sigma_1(B_\k(R))$, so Proposition \ref{prop2.2} applies and hence $\lambda_{2,\a}(\k,R)\ge 0$. 
	
	We claim that $\sigma_1(\Omega^*)\le 2\frac{sn'_\k}{sn_\k}$. Indeed, by Corollary \ref{Cheng-hd-cor}, $\sigma_1(\Omega^*)\le \sigma_1(B_{\k}(R))$ for $\k\le 0$; in particular, $\sigma_1(\Omega^*)\le\sigma_1(B_0(R)) = R^{-1}$. The claim follows from the elementary inequality $R^{-1} \leq 2\frac{sn'_\k}{sn_\k}$ for $\k\le0$. So part (2) of Proposition \ref{prop2.1} applies and hence $F'\ge-\a F$ on $(0,R]$.
	
	\textbf{Case 1.} $\a \in[-\sigma_1(\Omega_*),0)$.
	
	If $0<r\le R$, then we compute that 
	\begin{align*}
	H'(r)&=\underbrace{2F'F''-\frac{2(n-1)sn_\k'}{sn_\k^3}F^2+\frac{2(n-1)}{sn_\k^2}FF'}_{I}\\
	&\quad +\underbrace{2\a (F')^2+2\a F F''-\a\frac{n-1}{sn_\k^2}F^2+2\a (n-1)\frac{sn_\k'}{sn_\k}FF'}_{II}.
	\end{align*}
	Using the ODE \eqref{odeF} of $F(r)$, we have
	\begin{align*}
	I&=2F'F''-\frac{2(n-1)sn_\k'}{sn_\k^3}F^2+\frac{2(n-1)}{sn_\k^2}FF'\\
	&=2F'\left(-(n-1)\frac{sn'_\k}{sn_\k}F'-\left(\l_{2,\a}(B_\k(R))-\frac{n-1}{sn_\k^2}\right)F\right)-\frac{2(n-1)sn_\k'}{sn_\k^3}F^2\\
	&\quad +\frac{2(n-1)}{sn_\k^2}FF'\\
	&=-\frac{2(n-1)}{sn_\k^3}\left(
	sn_\k^2sn_\k'(F')^2-2sn_\k FF'+sn_\k'F^2
	\right)-2\l_{2,\a}(B_\k(R))FF'\\
	&\le -\frac{2(n-1)}{sn_\k^3}\left(
	sn_\k F'-F\right)^2-2\l_{2,\a}(B_\k(R))FF'\\
	&\le -2\l_{2,\a}(B_\k(R))FF',
	\end{align*}
	where in the first inequality we used $sn'_\k(r)\geq 1$ for $\k\le 0$. Using \eqref{odeF} again,
	\begin{align*}
	II&= 2\a (F')^2+2\a F F''-\a\frac{n-1}{sn_\k^2}F^2+2\a (n-1)\frac{sn_\k'}{sn_\k}FF'\\
	&= 2\a (F')^2+2\a F\left(-(n-1)\frac{sn'_\k}{sn_\k}F'-\left(\l_{2,\a}(B_\k(R))-\frac{n-1}{sn_\k^2}\right)F\right)\\
	&\quad -\a\frac{n-1}{sn_\k^2}F^2+2\a (n-1)\frac{sn_\k'}{sn_\k}FF'\\
	&= 2\a (F')^2+\a\frac{n-1}{sn_\k^2}F^2-2\a\l_{2,\a}(B_\k(R))F^2\\
	&< -2\a\l_{2,\a}(B_\k(R))F^2.
	\end{align*}
	So we conclude
	\begin{align*}
	H'(r)&< -2\l_{2,\a}(B_\k(R))FF'-2\a\l_{2,\a}(B_\k(R))F^2\\
	&\le 2\a\l_{2,\a}(B_\k(R))F^2-2\a\l_{2,\a}(B_\k(R))F^2\\
	&= 0,
	\end{align*}
	where in the second inequality we used $F'\geq -\a F$ on $(0,R]$ and $\lambda_{2,\a}(\k,R)\ge 0$. Therefore, $H(r)$ is monotonically decreasing on $(0,R]$.
	
	If $r\ge R$, then by definition \eqref{defF}, $F(r)=F(R)e^{-\a(r-R)}$. So then
	\begin{align*}
	H(r)=\left(-\a^2+\frac{n-1}{sn_\k^2(r)}+(n-1)\a\frac{sn_\k'(r)}{sn_\k(r)}\right)F^2(r).
	\end{align*}
	Differentiating $H$ in $r$ and using $sn_\k'\geq1$ for $\k\le 0$ and $sn_\k'' sn_\k - (sn_\k')^2=-1$, we have 
	\begin{align*}
	H'(r)&=\left(2\a^3-\frac{2(n-1)sn_\k'(r)}{sn_\k^3(r)}-3\frac{(n-1)\a}{sn_\k^2(r)}-\frac{2(n-1)\a^2}{sn_\k(r)}sn_\k'(r)\right)F^2(r)\\
	&\le\left(2\a^3-\frac{2(n-1)}{sn_\k^3(r)}-3\frac{(n-1)\a}{sn_\k^2(r)}-\frac{2(n-1)\a^2}{sn_\k(r)}\right)F^2(r)\\
	&=-\frac{2(n-1)}{sn_\k^3(r)}\left(1+\frac{3}{2}\a sn_\k(r)+\a^2 sn^2_\k(r)\right)F^2(r) +2\a^3F^2(r)\\
	&=-\frac{2(n-1)}{sn_\k^3(r)}\left(\left(1+\frac{3}{4}\a sn_\k(r) \right)^2+\frac{7}{16}\a^2 sn^2_\k(r)\right)F^2(r) +2\a^3F^2(r)\\
	&<0,
	\end{align*}
	thus proving that $H(r)$ is monotonically decreasing on $[R,\infty)$.
	
	\textbf{Case 2.} $\a=0$.
	
	By the same argument as in Case 1, and using that $\lambda_{2,0}(\k,R)$ is the first nonzero Neumann eigenvalue of $B_\k(R)$, which is positive, we reach the same conclusion that $H'<0$ on $(0,\infty)$.
	
	Therefore, we have proved the proposition.
\end{proof}

We have the following center of mass lemma.
\begin{lemma}\label{lmtest}
	There exists a point $p\in \operatorname{hull}(\Omega)$, the convex hull of $\Omega$, such that
	\begin{equation}\label{test}
	\int_{\Omega} F(r_p(x))\frac{\exp_p^{-1}(x)}{r_p(x)} u_1(x) \,d\mu =0,
	\end{equation}
	where $F$ is defined in \eqref{defF}, $r_p(x)=\operatorname{dist}_g(p,x)$, $\exp_p^{-1}$ is the inverse of the exponential map $\exp_p:T_p M_\kappa\to M_\kappa$, and $u_1$ is a first eigenfunction for $\lambda_{1,\a}(\Omega)$.
\end{lemma}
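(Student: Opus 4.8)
The plan is to prove the lemma by a topological degree/fixed-point argument applied to a suitable vector field on the convex hull of $\Omega$. For each point $p \in \operatorname{hull}(\Omega)$, define the vector
\begin{align*}
V(p) := \int_{\Omega} F(r_p(x))\,\frac{\exp_p^{-1}(x)}{r_p(x)}\, u_1(x)\, d\mu \;\in\; T_p M_\kappa.
\end{align*}
Since $M_\kappa$ is a complete simply connected space form with $\kappa \le 0$, it is diffeomorphic to $\mathbb{R}^n$ via $\exp_p$ (Cartan--Hadamard), so $\exp_p^{-1}$ is a globally defined smooth map and the integrand is well-defined and continuous for every $p$; moreover $r_p(x)>0$ for $x\ne p$ and the factor $\exp_p^{-1}(x)/r_p(x)$ is a unit vector, so there is no singularity issue in the integrand except at $x=p$, where it is bounded. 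Thus $V$ is a continuous vector field on $\operatorname{hull}(\Omega)$, and the goal is to show $V(p_0)=0$ for some $p_0$.

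First I would fix a reference point and trivialize: pick $o \in M_\kappa$ and use parallel transport along radial geodesics (or the global chart $\exp_o$) to identify each $T_p M_\kappa$ with $T_o M_\kappa \cong \mathbb{R}^n$, turning $V$ into a continuous map $W:\operatorname{hull}(\Omega)\to \mathbb{R}^n$. Next I would check that $W$ points "outward" on the boundary of a large ball: for $p$ far from $\Omega$ (but we restrict to $\operatorname{hull}(\Omega)$, which is compact, so instead we argue on $\partial(\operatorname{hull}(\Omega))$ or a slightly enlarged convex body), each displacement vector $\exp_p^{-1}(x)$ for $x \in \Omega$ has nonnegative inner product with the inward direction, because $\operatorname{hull}(\Omega)$ is geodesically convex in the nonpositively curved space form and $u_1>0$, $F\ge 0$. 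More precisely, if $p$ lies on the boundary of the convex hull and $n_p$ is an inward normal to a supporting hyperplane at $p$, then $\langle \exp_p^{-1}(x), n_p\rangle \ge 0$ for all $x\in\Omega$, with strict inequality on a set of positive measure; since $F(r_p(x))u_1(x)/r_p(x)>0$ there, we get $\langle V(p), n_p\rangle > 0$. Hence $V$ (equivalently $W$) never vanishes on $\partial(\operatorname{hull}(\Omega))$ and always has a positive component along the inward normal, so it cannot be homotoped to a nonvanishing field through nonvanishing fields on a ball; by the Poincaré--Hopf / Brouwer argument its degree on the boundary is $1$, forcing a zero $p_0$ in the interior.

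The main obstacle is making the convexity/supporting-hyperplane step rigorous in the Riemannian setting: one must use that in a Cartan--Hadamard manifold geodesically convex sets have supporting "half-spaces" (sublevel sets of the convex function $x\mapsto \operatorname{dist}(x, \cdot)$ composed appropriately), and that $\exp_p^{-1}$ restricted to such a convex set has image in the corresponding half-space of $T_pM_\kappa$ — this is exactly where $\kappa\le 0$ is used, via the fact that $\exp_p^{-1}$ is distance-nonincreasing and maps geodesics to rays. An alternative, cleaner route avoiding degree theory: define on $\operatorname{hull}(\Omega)$ the scalar function $\Phi(p):=\int_\Omega G(r_p(x))\,u_1(x)\,d\mu$ for an antiderivative $G$ with $G'=F$ (so $G$ is convex and increasing since $F\ge 0$ is nondecreasing), note $\nabla_p \Phi = -V(p)$ by differentiating under the integral and using $\nabla_p r_p(x) = -\exp_p^{-1}(x)/r_p(x)$, and observe that $\Phi$ is a convex function on the geodesically convex compact set $\operatorname{hull}(\Omega)$, hence attains an interior minimum (it is strictly increasing in the outward direction on the boundary by the same supporting-hyperplane estimate), at which $\nabla_p\Phi = 0$, i.e. $V(p_0)=0$. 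I would present this variational version as the main argument, as it sidesteps degree theory entirely and only needs convexity of $r_p(\cdot)$ and the strict outward-monotonicity of $\Phi$ on $\partial(\operatorname{hull}(\Omega))$.
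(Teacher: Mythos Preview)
Your first approach is essentially the paper's: the authors define the same vector field $X(p)=V(p)$, observe (with a reference to Edelen) that its flow maps $\operatorname{hull}(\Omega)$ into itself---which is exactly your inward-pointing condition $\langle V(p),n_p\rangle>0$ on the boundary---and conclude by Brouwer's fixed point theorem; you phrase the last step via degree/Poincar\'e--Hopf, which is equivalent. The paper omits the supporting-hyperplane justification you spell out, so your write-up is in fact more complete on that point.

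Your variational alternative via the potential $\Phi(p)=\int_\Omega G(r_p(x))u_1(x)\,d\mu$ with $G'=F$ is a genuinely different and rather elegant route not taken in the paper. It trades the topological fixed-point argument for a direct minimization: convexity of $G$ (from $F'\ge 0$) and of $r_p$ in Cartan--Hadamard space give convexity of $\Phi$, and your boundary estimate forces the minimum to be interior, where $\nabla\Phi=-V$ vanishes. This buys you a constructive characterization of $p$ (as the unique minimizer of a strictly convex functional, once you note $F>0$ on $(0,\infty)$ so $G$ is strictly convex) at the cost of checking differentiability of $\Phi$ at points $p\in\Omega$---which is fine here since $F(0)=0$ makes $G(r)\sim r^2/2$ near $0$. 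Either argument is acceptable; the paper's is shorter to state, yours is more self-contained.
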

\begin{proof}
	The proof is similar to \cite[Lemma 4.1]{Edelen17}. 
	Define the vector field
	\begin{align*}
	X(p)=\int_{\Omega} F(r_p(x))\frac{\exp_p^{-1}(x)}{r_p(x)} u_1(x) \, d\mu.
	\end{align*}
	Then the integral curves of $X$ define a mapping from $\operatorname{hull}(\Omega)$ to itself. Since $\operatorname{hull}(\Omega)$ is convex and contained in the injectivity radius, $\operatorname{hull}(\Omega)$, it is a topological ball. Therefore, $X$ must have a zero by the Brouwer fixed point theorem.
\end{proof}

The proof of Theorem \ref{thm1} now proceeds in four propositions.

From here on, we fix the point $p$ according to Lemma \ref{lmtest} so that \eqref{test} holds. Let $(r, \theta)$ denote the polar coordinates centered at $p$ and $J(r, \theta)$ denote the volume element at $(r,\theta)$. Then we have
\begin{align*}
\frac{\exp^{-1}_p(x)}{r_p(x)} = \left(\psi_1(\theta),\psi_2(\theta),\cdots, \psi_n(\theta) \right),
\end{align*}
where $\psi_i$'s are the restrictions of the linear coordinate functions on $\mathbb{S}^{n-1}$. We define
\begin{align*}
v_i(x) := F(r_p(x)) \psi_i(\theta), \quad 1\leq i\leq n,
\end{align*}
and rewrite \eqref{test} as
\begin{align*}
\int_{\Omega} v_i(x)u_1(x) \, d\mu =0, \quad 1\leq i\leq n.
\end{align*}
So $v_i$'s are test functions for $\lambda_{2,\a}(\Omega)$.
\begin{proposition}\label{prop 4.2}
	Under the hypotheses of Theorem \ref{thm1}, there holds
	\begin{equation}\label{sig11}
	\l_{2,\a}(\Omega) \le \frac{\int_{\Omega} \left|F'(r_p)\right|^2+\frac{n-1}{sn_\kappa^2(r_p)}F^2(r_p) \, d\mu+\a\int_{\p \Omega} F^2(r_p)\, dA}{\int_{\Omega}  F^2(r_p)\, d\mu}.
	\end{equation}
\end{proposition}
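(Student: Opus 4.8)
The plan is to combine the test functions $v_i = F(r_p)\psi_i(\theta)$ with the variational characterization \eqref{eq 1.3} of $\l_{2,\a}(\Omega)$, and then estimate the resulting Rayleigh quotient from above using the symmetrization function $H$ from Proposition \ref{H'}. Since each $v_i$ is orthogonal to $u_1$ in $L^2(\Omega)$ by the choice of $p$ in Lemma \ref{lmtest}, I may plug each $v_i$ into \eqref{eq 1.3} and sum over $i$: this gives
\begin{align*}
\l_{2,\a}(\Omega)\int_\Omega F^2(r_p)\,d\mu = \l_{2,\a}(\Omega)\sum_{i=1}^n \int_\Omega v_i^2\,d\mu \le \sum_{i=1}^n\left(\int_\Omega |\nabla v_i|^2\,d\mu + \a\int_{\p\Omega} v_i^2\,dA\right),
\end{align*}
where I have used $\sum_i \psi_i^2 = 1$ to identify $\sum_i v_i^2 = F^2(r_p)$ and likewise $\sum_i v_i^2|_{\p\Omega} = F^2(r_p)|_{\p\Omega}$.

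Next I would compute the gradient term. Writing the metric $g_\k$ in polar coordinates centered at $p$, we have $|\nabla v_i|^2 = (F'(r_p))^2\psi_i^2(\theta) + \frac{F^2(r_p)}{sn_\k^2(r_p)}|\nabla^{\mathbb{S}^{n-1}}\psi_i(\theta)|^2$ (here the radial coordinate ranges over $[0,\diam(\Omega)]$ or beyond, and $F$ has been extended in \eqref{defF} precisely so this makes sense for $r_p > R$). Summing over $i$ and using $\sum_i \psi_i^2 = 1$ and $\sum_i |\nabla^{\mathbb{S}^{n-1}}\psi_i|^2 = n-1$ gives
\begin{align*}
\sum_{i=1}^n |\nabla v_i|^2 = (F'(r_p))^2 + \frac{n-1}{sn_\k^2(r_p)}F^2(r_p).
\end{align*}
Substituting this back yields exactly the numerator on the right-hand side of \eqref{sig11}, so the inequality follows immediately. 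The orthogonality to $u_1$ is what makes the $v_i$ admissible in \eqref{eq 1.3}; everything else is an algebraic identity.

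The main point to be careful about is that the $v_i$ really lie in $W^{1,2}(\Omega)$ and that the polar-coordinate computation of $|\nabla v_i|^2$ is valid on all of $\Omega$. Because $(M_\k,g_\k)$ is a simply connected space form, $\exp_p$ is a global diffeomorphism, so $r_p$ and the angular functions $\psi_i(\theta)$ are smooth on $M_\k\setminus\{p\}$, and near $p$ the functions $v_i = F(r_p)\psi_i$ extend to Lipschitz (indeed $C^1$, since $F(r)/r \to 1$ and $F'(0)=1$) functions because $v_i(x) = F(r_p(x))\,\frac{\exp_p^{-1}(x)}{r_p(x)}\cdot e_i$ and $F(r)/r$ is bounded near $0$; hence $v_i\in W^{1,2}(\Omega)$ and the boundary integral over the Lipschitz boundary $\p\Omega$ is well-defined by trace theory. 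This is routine but worth a sentence. Note that Proposition \ref{H'} itself is not yet invoked here — this proposition only sets up the Rayleigh-quotient bound; the monotonicity of $H$ will enter in the subsequent propositions when the right-hand side of \eqref{sig11} is compared with $\l_{2,\a}(\Omega^*)$ via a rearrangement argument.
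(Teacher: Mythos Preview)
Your proof is correct and follows essentially the same approach as the paper: use the $v_i$ as admissible test functions in \eqref{eq 1.3}, sum the resulting Rayleigh-quotient inequalities over $i$, and simplify via $\sum_i\psi_i^2=1$ and $\sum_i|\nabla^{\mathbb{S}^{n-1}}\psi_i|^2=n-1$. Your additional remarks on the $W^{1,2}$ regularity of the $v_i$ are a welcome clarification, and you are right that the monotonicity of $H$ plays no role at this stage (despite the opening sentence of your plan).
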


\begin{proof}
	We denote by $\nabla^{\mathbb{S}^{n-1}}$ the covariant derivative  with respect to the standard metric on $\mathbb{S}^{n-1}$, and by $\nabla$ the covariant derivative with respect to the metric $g_\k=dr^2+sn^2_\k(r) d\theta^2 $ on $M_\k$. Using 
	\begin{align*}
	\sum_{i=1}^n\psi^2_i=1 \quad\text{and} \quad \sum_{i=1}^n |\nabla^{\mathbb{S}^{n-1}} \psi_i|^2=n-1,
	\end{align*}
	we compute that
	\begin{align}\label{eq 4.3}
	\sum_{i=1}^n \int_{\Omega} \left|\nabla  v_i\right|^2 \, d\mu \nonumber
	&= \sum_{i=1}^n \int_{\Omega} \left|\nabla  \left(F(r_p)\psi_i\right)\right|^2\, d\mu \nonumber \\
	&=\sum_{i=1}^n\int_{\Omega}\left( \left|F'(r_p)\right|^2  \psi^2_i + \frac{F^2(r_p)}{sn^2_\k(r_p)} |\nabla^{\mathbb{S}^{n-1}} \psi_i|^2 \right) \, d\mu \nonumber \\
	&= \int_{\Omega} \left( \left|F'(r_p)\right|^2+\frac{n-1}{sn_\kappa^2(r_p)}F^2(r_p)\right) \, d\mu.
	\end{align}
	On the other hand, 
	\begin{equation}\label{eq 4.4}
	\sum_{i=1}^n \int_{\p \Omega} v_i^2 \, dA =\sum_{i=1}^n \int_{\p \Omega} |F(r_p)|^2 \psi_i^2\, dA  =\int_{\p \Omega} |F(r_p)|^2\, dA.
	\end{equation}
	So using the averaging of Rayleigh quotients for $v_i$'s, and \eqref{eq 4.3} and \eqref{eq 4.4}, we obtain
	\begin{align*}
	\l_{2,\a}(\Omega) &\le  \frac{\sum\limits_{i=1}^n \int_{\Omega} |\nabla v_i|^2\, d\mu+\a\sum\limits_{i=1}^n  \int_{\p \Omega} v_i^2\, dA}{\sum\limits_{i=1}^n  \int_{\Omega} v_i^2\, d\mu} \\
	&= \frac{\int_{\Omega} \left|F'(r_p)\right|^2+\frac{n-1}{sn_\kappa^2(r_p)}F^2(r_p) \, d\mu+\a\int_{\p \Omega} F^2(r_p)\, dA}{\int_{\Omega}  F^2(r_p)\, d\mu}.
	\end{align*}
	This proves the proposition. 
\end{proof}

\begin{proposition}\label{prop 4.3}
	Under the hypotheses of Theorem \ref{thm1}, there holds
	\begin{align}
	\l_{2,\a}(\Omega)\le \frac{\int_{\Omega} H(r_p) \, d\mu}{\int_\Omega  F^2(r_p) \, d\mu}.\label{20}
	\end{align}
\end{proposition}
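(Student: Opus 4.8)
The plan is to start from Proposition \ref{prop 4.2} and massage the numerator of the Rayleigh quotient into the integral of $H(r_p)$ plus a term that has a sign. Recall that
\[
H(r)=(F'(r))^2+\frac{n-1}{sn_\k^2(r)}F^2(r)+2\a F(r)F'(r)+\a\frac{(n-1)sn_\k'(r)}{sn_\k(r)}F^2(r),
\]
so the first two terms of $H$ are exactly the integrand appearing in \eqref{sig11}. Hence
\[
\int_\Omega\left(|F'(r_p)|^2+\frac{n-1}{sn_\k^2(r_p)}F^2(r_p)\right)d\mu
=\int_\Omega H(r_p)\,d\mu-\a\int_\Omega\left(2F(r_p)F'(r_p)+\frac{(n-1)sn_\k'(r_p)}{sn_\k(r_p)}F^2(r_p)\right)d\mu.
\]
So it suffices to show that
\[
\a\int_\Omega\left(2F(r_p)F'(r_p)+\frac{(n-1)sn_\k'(r_p)}{sn_\k(r_p)}F^2(r_p)\right)d\mu\;\ge\;\a\int_{\p\Omega}F^2(r_p)\,dA,
\]
since then the numerator in \eqref{sig11} is at most $\int_\Omega H(r_p)\,d\mu$, which gives \eqref{20}.

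To prove that inequality I would recognize the left-hand integrand as a divergence. Working in the space form $M_\k$ with the polar coordinates centered at $p$, the Laplacian of a radial function satisfies $\Delta \phi(r_p)=\phi''(r_p)+(n-1)\frac{sn_\k'(r_p)}{sn_\k(r_p)}\phi'(r_p)$, and more to the point $\operatorname{div}\big(F^2(r_p)\nabla r_p\big)=2F(r_p)F'(r_p)+(n-1)\frac{sn_\k'(r_p)}{sn_\k(r_p)}F^2(r_p)$ wherever $r_p$ is smooth (away from $p$ and the cut locus, which is empty here since $M_\k$ is a simply connected nonpositively curved space form). Applying the divergence theorem on $\Omega$ gives
\[
\int_\Omega\left(2F(r_p)F'(r_p)+(n-1)\frac{sn_\k'(r_p)}{sn_\k(r_p)}F^2(r_p)\right)d\mu
=\int_{\p\Omega}F^2(r_p)\,\langle\nabla r_p,\nu\rangle\,dA,
\]
where $\nu$ is the outward unit normal to $\p\Omega$. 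Since $|\nabla r_p|=1$, we have $\langle\nabla r_p,\nu\rangle\le 1$ pointwise on $\p\Omega$, and because $F\ge 0$ this yields
\[
\int_\Omega\left(2F(r_p)F'(r_p)+(n-1)\frac{sn_\k'(r_p)}{sn_\k(r_p)}F^2(r_p)\right)d\mu\le\int_{\p\Omega}F^2(r_p)\,dA.
\]
Multiplying by $\a\le 0$ reverses the inequality and gives exactly what is needed; combined with the decomposition of the numerator above, \eqref{20} follows.

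The main technical point to be careful about is the use of the divergence theorem when $p\in\operatorname{hull}(\Omega)$ may lie on $\p\Omega$ or inside $\Omega$, so that $r_p$ is only Lipschitz (not smooth) at $p$. This is handled by excising a small geodesic ball $B_\eps(p)$ and letting $\eps\to 0$: the boundary contribution over $\p B_\eps(p)$ is $O(\eps^{n-1}\cdot F^2(\eps))=O(\eps^{n+1})\to 0$ because $F(r)\sim r$ near $0$, so no extra term survives. (If $p\notin\bar\Omega$ there is nothing to excise.) Everything else — the identity $\operatorname{div}(F^2(r_p)\nabla r_p)=2FF'+(n-1)\frac{sn_\k'}{sn_\k}F^2$, the bound $\langle\nabla r_p,\nu\rangle\le1$, and the sign bookkeeping for $\a\le0$ — is routine. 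One should also note that $F$ is only $C^1$ across $r=R$ (its second derivative jumps), but the divergence identity above only uses $F$ and $F'$, so this causes no difficulty.
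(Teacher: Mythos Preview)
Your proof is correct and follows essentially the same approach as the paper: both use the divergence theorem on the vector field $F^2(r_p)\nabla r_p$ together with $\langle\nabla r_p,\nu\rangle\le 1$ to bound $\a\int_{\p\Omega}F^2(r_p)\,dA$ by the volume integral of $(F^2)'+(n-1)\frac{sn_\k'}{sn_\k}F^2$, and then substitute into \eqref{sig11}. The only differences are cosmetic (you decompose the numerator first and then bound, while the paper bounds the boundary term first and then substitutes), and you add the technical remarks about the singularity of $r_p$ at $p$ and the $C^1$ regularity of $F$ at $r=R$, which the paper leaves implicit.
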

\begin{proof}
	Using $|\nabla r_p | =1$, we have
	\begin{align}\label{sig2}
	\int_{\partial \Omega} F^2(r_p) \, dA \nonumber &\ge \int_{\partial \Omega} F^2(r_p)\langle \nabla r_p, \nu \rangle \, dA \nonumber\\
	&=\int_\Omega \operatorname{div}\left(F^2(r_p)\nabla r_p\right)\, d\mu \nonumber\\
	&=\int_\Omega \left( (F^2)'(r_p) + F^2(r_p)\Delta r_p \right) \, d\mu \nonumber\\
	&=\int_\Omega \left( (F^2)'(r_p) +\frac{(n-1)sn_\kappa'(r)}{sn_\kappa(r)}F^2(r_p) \right) \, d\mu.
	\end{align} 
	Substituting \eqref{sig2} into \eqref{sig11} and recalling definition \eqref{defH} of $H$, then \eqref{20} follows.
\end{proof}

Let $\Omega_p^*$ be the geodesic ball having volume $|\Omega|_\k$ and centered at $p$ so that \eqref{test} holds.

\begin{proposition}\label{movemass}
	Under the hypotheses of Theorem \ref{thm1}, we have
	\begin{align}\label{eq:movemass}
	\frac{\int_{\Omega} H(r_p)\ d \mu}{\int_{\Omega} F^2(r_p)\, d\mu} \le \frac{\int_{\Omega_p^*} H(r_p)\ d \mu}{\int_{\Omega_p^*} F^2(r_p)\, d\mu}.
	\end{align}
	Equality holds if and only if $\Omega=\Omega^*_p$.
\end{proposition}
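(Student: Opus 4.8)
The plan is to reduce \eqref{eq:movemass} to a one–dimensional ``bathtub'' rearrangement for the common radial profile on $M_\k$. First I would introduce
\[
\Lambda:=\frac{\int_{\Omega_p^*}H(r_p)\,d\mu}{\int_{\Omega_p^*}F^2(r_p)\,d\mu},
\]
which is well defined since $F>0$ on $(0,\infty)$ and $|\Omega_p^*|_\k>0$. Because $\int_{\Omega_p^*}\big(H(r_p)-\Lambda F^2(r_p)\big)\,d\mu=0$ by construction, and $\int_\Omega F^2(r_p)\,d\mu>0$, the inequality \eqref{eq:movemass} is equivalent to
\[
\int_\Omega \phi(r_p)\,d\mu\ \le\ \int_{\Omega_p^*}\phi(r_p)\,d\mu=0,\qquad \phi:=H-\Lambda F^2 .
\]
Thus the whole proposition comes down to the monotonicity of the radial function $\phi$ together with the sign of $\Lambda$.

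The first substantive step is to show $\Lambda\ge 0$. On the geodesic ball $\Omega_p^*=B_\k(R)$ centred at $p$, the functions $v_i=F(r_p)\psi_i$ are precisely the second Robin eigenfunctions recalled before Proposition \ref{prop2.1} (here $F=F_1$ on $[0,R]$ solves \eqref{odeF} with the boundary condition $F'(R)=-\a F(R)$); moreover \eqref{eq 4.3} is an identity and the divergence theorem used in \eqref{sig2} holds with equality because $\nu=\nabla r_p$ on $\p\Omega_p^*$. Running Propositions \ref{prop 4.2} and \ref{prop 4.3} with these equalities yields $\int_{\Omega_p^*}H(r_p)\,d\mu=\l_{2,\a}(B_\k(R))\int_{\Omega_p^*}F^2(r_p)\,d\mu$, i.e.\ $\Lambda=\l_{2,\a}(B_\k(R))$, which is $\ge 0$ by Proposition \ref{prop2.2} since $\a\ge-\sigma_1(\Omega^*)=-\sigma_1(B_\k(R))$. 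Next I would show $\phi$ is strictly decreasing on $(0,\infty)$: on each of $(0,R]$ and $[R,\infty)$ one computes $\phi'=H'-2\Lambda F F'$, and Proposition \ref{H'} gives $H'<0$ there while $F\ge 0$ and $F'\ge 0$ (Proposition \ref{prop2.1}(1) on $(0,R]$, and $F'=-\a F\ge 0$ for $r>R$) and $\Lambda\ge 0$, so $\phi'<0$ on each piece; since $F$ is continuously differentiable on $(0,\infty)$, $\phi$ is continuous and hence strictly decreasing on all of $(0,\infty)$.

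The final step is the rearrangement. Writing $A=\Omega\cap\Omega_p^*$, $B=\Omega\setminus\Omega_p^*$, $C=\Omega_p^*\setminus\Omega$, equality of volumes gives $|B|=|C|$; since $r_p\ge R$ a.e.\ on $B$ and $r_p\le R$ on $C$, monotonicity of $\phi$ gives $\phi(r_p)\le\phi(R)$ on $B$ and $\phi(r_p)\ge\phi(R)$ on $C$, hence
\[
\int_B\phi(r_p)\,d\mu\ \le\ \phi(R)\,|B|\ =\ \phi(R)\,|C|\ \le\ \int_C\phi(r_p)\,d\mu .
\]
Adding $\int_A\phi(r_p)\,d\mu$ to both sides gives $\int_\Omega\phi(r_p)\,d\mu\le\int_{\Omega_p^*}\phi(r_p)\,d\mu=0$, which is \eqref{eq:movemass}. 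For the equality case, if $\Omega\ne\Omega_p^*$ (modulo null sets) then $|B|=|C|>0$ and $r_p>R$ on a set of positive measure in $B$, so strict monotonicity of $\phi$ makes the first inequality above strict and forces strict inequality in \eqref{eq:movemass}; conversely $\Omega=\Omega_p^*$ trivially gives equality.

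I expect the only genuinely delicate point to be the sign of $\Lambda$: without $\Lambda\ge 0$ the combination $\phi=H-\Lambda F^2$ need not be monotone even though $H$ is (Proposition \ref{H'}), and it is exactly here that the hypothesis $\a\ge-\sigma_1(\Omega^*)$ enters, through the identification $\Lambda=\l_{2,\a}(B_\k(R))$ and Proposition \ref{prop2.2}. Everything else is the standard mass–transplantation bookkeeping, resting on the monotonicity of $H$ and of $F$ already in hand.
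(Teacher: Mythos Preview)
Your proof is correct and rests on the same core idea as the paper's: split $\Omega$ and $\Omega_p^*$ into the common part and the two equal-volume complements, and use that $r_p\ge R$ on $\Omega\setminus\Omega_p^*$ and $r_p\le R$ on $\Omega_p^*\setminus\Omega$ together with monotonicity. The packaging differs slightly. The paper keeps numerator and denominator separate, proving $\int_\Omega H\le\int_{\Omega_p^*}H$ from the monotone decrease of $H$ and $\int_\Omega F^2\ge\int_{\Omega_p^*}F^2$ from the monotone increase of $F$, and then simply divides; this is marginally more direct since it uses only Proposition~\ref{H'} and the non-decrease of $F$, without needing to identify $\Lambda$. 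You instead combine everything into the single radial function $\phi=H-\Lambda F^2$ and do one rearrangement; to make $\phi$ decreasing you must first establish $\Lambda\ge 0$, which you do by effectively invoking Proposition~\ref{propsigma2} (the identity $\Lambda=\l_{2,\a}(B_\k(R))$) together with Proposition~\ref{prop2.2}. It is worth noting that the paper's division step also tacitly relies on $\int_{\Omega_p^*}H\ge 0$ (equivalently $\Lambda\ge 0$): from $a\le b$ and $c\ge d>0$ one cannot conclude $a/c\le b/d$ without knowing $b\ge 0$. So your version has the virtue of making this hidden sign condition explicit, at the cost of a forward reference; otherwise the two arguments are the same rearrangement in different clothing.
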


\begin{proof}
	Recall that $F$ defined in \eqref{defF} is non-decreasing, we have
	\begin{align}\label{ineqF^2}
	\int_{\Omega} F^2(r_p) d\mu &= \int_{\Omega\cap\Omega_p^*} F^2(r_p) d\mu + \int_{\Omega\setminus\Omega_p^*} F^2(r_p) d\mu \nonumber \\
	&\ge \int_{\Omega\cap\Omega_p^*} F^2(r_p) d\mu + \int_{\Omega\setminus\Omega_p^*} F^2(R) d\mu \nonumber \\
	&\ge \int_{\Omega_p^*} F^2(r_p) d\mu.
	\end{align}
	By Proposition \ref{H'}, $H$ is monotonically decreasing, so then
	\begin{align}\label{ineqH}
	\int_{\Omega} H(r_p) d\mu &= \int_{\Omega\cap\Omega_p^*} H(r_p) d\mu + \int_{\Omega\setminus\Omega_p^*} H(r_p) d\mu \nonumber \\
	&\le \int_{\Omega\cap\Omega_p^*} H(r_p) d\mu + \int_{\Omega\setminus\Omega_p^*} H(R) d\mu \nonumber \\
	&\le \int_{\Omega_p^*} H(r_p) d\mu.
	\end{align}
	Inequality \eqref{eq:movemass} follows from \eqref{ineqF^2} and \eqref{ineqH}. In particular, equality in \eqref{eq:movemass} holds if and only if both \eqref{ineqF^2} and \eqref{ineqH} are equalities, which occurs if and only if $\Omega=\Omega^*_p$.
	
	Therefore, the proposition is proved.
\end{proof}

\begin{proposition}\label{propsigma2}
	Under the hypotheses of Theorem \ref{thm1}, there holds
	\begin{align*}
	\l_{2,\a}(\Omega_p^*) =\frac{\int_{\Omega_p^*} H(r_p)\, d\mu}{\int_{\Omega_p^*} F^2(r_p)\, d\mu} .
	\end{align*}
\end{proposition}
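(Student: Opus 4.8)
The plan is to show that, on the geodesic ball $\Omega_p^*=B_\k(R)$, the chain of inequalities from Propositions \ref{prop 4.2} and \ref{prop 4.3} collapses to an equality, and that the resulting quotient is exactly $\l_{2,\a}(B_\k(R))$. First I would observe that, when $\Omega=\Omega_p^*$, the functions $v_i=F_1(r)\psi_i(\theta)$ (with $F_1$ the solution of \eqref{odeF}, which on $[0,R]$ agrees with $F$ from \eqref{defF}) are precisely the second Robin eigenfunctions of $B_\k(R)$ recalled in Section \ref{prelim}. Hence the averaged Rayleigh quotient used in the proof of Proposition \ref{prop 4.2} is not merely an upper bound but equals $\l_{2,\a}(B_\k(R))$: each $v_i$ realizes the infimum in \eqref{eq 2.2}, so averaging them still gives $\l_{2,\a}(B_\k(R))$. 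This identifies
\[
\l_{2,\a}(\Omega_p^*)=\frac{\int_{\Omega_p^*}\bigl(|F'(r_p)|^2+\tfrac{n-1}{sn_\k^2(r_p)}F^2(r_p)\bigr)\,d\mu+\a\int_{\p\Omega_p^*}F^2(r_p)\,dA}{\int_{\Omega_p^*}F^2(r_p)\,d\mu}.
\]

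Next I would verify that on the ball the integration-by-parts inequality \eqref{sig2} is an equality. This is immediate: on $\p B_\k(R)$ the outward normal $\nu$ is exactly $\nabla r_p$, so $\langle\nabla r_p,\nu\rangle\equiv 1$ and the first line of \eqref{sig2} holds with equality; the divergence theorem and the identity $\Delta r_p=(n-1)sn_\k'(r)/sn_\k(r)$ then give
\[
\int_{\p\Omega_p^*}F^2(r_p)\,dA=\int_{\Omega_p^*}\Bigl((F^2)'(r_p)+\tfrac{(n-1)sn_\k'(r)}{sn_\k(r)}F^2(r_p)\Bigr)\,d\mu.
\]
Substituting this into the numerator above and recalling the definition \eqref{defH} of $H$ — namely $|F'|^2+\tfrac{n-1}{sn_\k^2}F^2+\a\bigl((F^2)'+\tfrac{(n-1)sn_\k'}{sn_\k}F^2\bigr)=H$, using $(F^2)'=2FF'$ — converts the numerator into exactly $\int_{\Omega_p^*}H(r_p)\,d\mu$. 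Combining the two displays yields the claimed identity
\[
\l_{2,\a}(\Omega_p^*)=\frac{\int_{\Omega_p^*}H(r_p)\,d\mu}{\int_{\Omega_p^*}F^2(r_p)\,d\mu}.
\]

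I do not expect a serious obstacle here; the proposition is essentially a bookkeeping statement saying that every inequality in Propositions \ref{prop 4.2}–\ref{movemass} is sharp on the ball. The one point that needs a little care is the first step: one must make sure that the test functions $v_i$ produced by Lemma \ref{lmtest} for $\Omega=\Omega_p^*$ really are the genuine second Robin eigenfunctions — i.e. that $F$ restricted to $[0,R]$ is $F_1$ and that $\psi_i$ are the spherical linear functions — so that the averaged Rayleigh quotient equals $\l_{2,\a}(B_\k(R))$ rather than something larger. Once that identification is in place, equality in \eqref{sig2} on a metric ball and the algebraic definition of $H$ finish the proof.
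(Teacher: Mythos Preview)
Your proposal is correct and follows essentially the same approach as the paper: both arguments use that the $v_i=F(r)\psi_i$ are the genuine second Robin eigenfunctions on $B_\k(R)$ so that the averaged Rayleigh quotient equals $\l_{2,\a}(\Omega_p^*)$, and then convert the boundary integral to a volume integral via the divergence theorem with equality (since $\nu=\nabla r_p$ on $\p\Omega_p^*$), recovering $H$ from its definition. Your cautionary remark about Lemma~\ref{lmtest} is unnecessary here---the paper simply invokes the eigenfunction description from Section~\ref{prelim} directly, without passing through the center-of-mass construction---but it does no harm.
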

\begin{proof}
	Recall that $F(r)\psi_i(\theta)$ are the eigenfunctions corresponding to $\l_{2,\a}(\Omega_p^*)$, so then
	\begin{align*}
	\l_{2,\a}(\Omega_p^*)=\frac{\int_{\Omega_p^*} |F'|^2(r_p)+\frac{n-1}{sn_\kappa^2 (r_p)}F^2(r_p)\, d\mu + \a\int_{\partial \Omega_p^*} F^2(r_p)\, dA}{\int_{\Omega_p^*} F^2(r_p)\, d\mu}
	\end{align*}
	and
	\begin{align*}
	\int_{\partial \Omega_p^*} F^2(r_p)\, dA&=\int_{\partial \Omega_p^*} \langle F^2(r_p)\nabla r_p, \nu\rangle\, dA\\
	&=\int_{\Omega_p^*} \operatorname{div}\left( F^2(r_p)\nabla r_p \right)\, d\mu\\
	&=\int_{\Omega_p^*}\left( (F^2)'+F^2\Delta r_p \right) \, d\mu\\
	&=\int_{\Omega_p^*}\left( (F^2)'+\frac{(n-1)sn'_\kappa}{sn_\kappa}F^2 \right)\, d\mu.
	\end{align*}
	
	Therefore, we have proved the proposition.
\end{proof}

\begin{proof}[Proof of Theorem \ref{thm1}]
	The theorem follows from combining Propositions \ref{prop 4.3}--\ref{propsigma2}. 
\end{proof}


\end{document}